\def\+{\oplus}
\newcommand{\G}{\Gamma}
\newcommand{\R}{{\mathbb R}}
\newcommand{\N}{{\mathbb N}}
\newcommand{\cS}{{\mathcal S}}
\newcommand{\cT}{{\mathcal T}}
\renewcommand{\phi}{\varphi}
\renewcommand{\a}{{\alpha}}
\newcommand{\g}{{\gamma}}
\renewcommand{\d}{{\delta}}
\newcommand{\e}{\epsilon}
\newcommand{\pd}{\partial}
\def\squareforqed{\hbox{\rlap{$\sqcap$}$\sqcup$}}
\def\qed{\ifmmode\else\unskip\quad\fi\squareforqed}
\def\smartqed{\def\qed{\ifmmode\squareforqed\else{\unskip\nobreak\hfil
\penalty50\hskip1em\null\nobreak\hfil\squareforqed
\parfillskip=0pt\finalhyphendemerits=0\endgraf}\fi}}
\newtheorem{remark}{\textbf{Remark}}[section]
\newtheorem{theorem}{\textbf{Theorem}}[section]
\newtheorem{proposition}{\textbf{Proposition}}[section]
\newtheorem{definition}{\textbf{Definition}}[section]
\newtheorem{acknowledgement}{\textbf{Acknowledgement}}
\numberwithin{equation}{section}
\begin{document}
\title[Eikonal equation on networks]{Viscosity solutions of Eikonal equations on
topological networks}
\author{Dirk Schieborn}
\address{Eberhard-Karls University,  T\"ubingen, Germany  (e-mail:Dirk@schieborn.de)}
\author{Fabio Camilli}
\address{Dipartimento di Scienze di Base e Applicate per l'Ingegneria,  ``Sapienza" Universit{\`a}  di Roma,
 00161 Roma, Italy, (e-mail:camilli@dmmm.uniroma1.it)}
\date{\today}

\begin{abstract}
In this paper we introduce a notion of viscosity solutions for Eikonal equations defined on
topological networks. Existence of a solution for the Dirichlet problem is obtained  via
representation formulas involving a distance function associated to the Hamiltonian.  A
comparison theorem based on Ishii's classical argument yields the uniqueness of the solution.
\end{abstract}

   \subjclass{Primary 49L25; Secondary 58G20, 35F20}

   \keywords{Hamilton-Jacobi equation; topological network; viscosity solution; comparison principle.}

\maketitle

\section{Introduction}
Several  phenomena in
physics, chemistry and biology, described by  interaction of different media, can be translated into
mathematical problems involving differential equations which are not defined on connected
manifolds as usual, but instead on so-called ramified spaces. The latter can be roughly
visualized as a collection of different manifolds of the same dimension (branches) with certain
parts of their boundaries identified (ramification space). The simplest examples of
ramified spaces are \emph{topological networks}, which basically are graphs embedded in Euclidean space.
The interaction among the collection of differential equations describing the behavior
of physical quantities on the branches is described by certain transition
conditions governing the interaction of the quantities across the ramification spaces. \par
From a mathematical point of view, the concept of ramified spaces has originally been introduced by  Lumer \cite{lu}
and has later been refined and specified by various authors, e. g., J. von Below and S. Nicaise
\cite{ni}. Since 1980, many results have been published treating
different kinds of interaction problems involving linear and quasi-linear differential equations
(confer for instance Lagnese and Leugering \cite{lal}, Lagnese, Leugering, and
Schmidt \cite{lls}, von Below and Nicaise \cite{vbn}).
\par
As far as we know, fully nonlinear equations such as Hamilton-Jacobi
equations have not yet been examined to a similar extent on ramified spaces.
In the present paper we attack the problem by extending the theory of viscosity solutions
for Hamilton-Jacobi equations  to topological networks.
The major task in this context is to establish the correct transition conditions   the  viscosity solutions
are subjected to at transition vertices. As a matter of fact, these transition conditions
make up the core of our theory, as they constitute the major difference
from the classical theory of viscosity solutions.\par
The main result of the present paper consists in the observation that the concept of viscosity
solutions can indeed be appropriately extended to the class of
first order Hamilton-Jacobi equations of Eikonal type on a topological network.
Any generalization of existing concepts to new scenarios has to be justified
by the preservation of essential features. In
the case of the theory of viscosity solutions, these features are uniqueness, existence,
and stability. We will show that our generalization
of viscosity solutions to networks will be just as ``weak'' to yield existence, while being
sufficiently ``selective'' in order to ensure uniqueness and stability with respect to uniform
convergence. We will also demonstrate that our
definition arises as a natural selection principle, which in particular selects the distance function as the unique
viscosity solution of the Dirichlet problem for the standard Eikonal equation on networks. \par
A different attempt to study Hamilton-Jacobi equations on networks has already been made in \cite{acct}. However, the aim of this paper
deviates from the one addressed in the present paper: its main issue is to characterize the value function of controlled dynamics in
$\R^2$ restricted to a network. Therefore, the choice of the Hamiltonian, which
may be discontinuous with respect to the state variable, has to be restricted by assumptions ensuring both a suitable continuity
property with respect to the state variable and the fact that the set of admissible controls be not empty at any point of the network.
Additionally, the definition of viscosity solution characterizing the value function is different from our approach,
as it involves directional derivatives
of test functions in $\R^2$ along the edges.
In the present paper, Hamilton-Jacobi equations and differentiation along the edges
 are given in an intrinsic way making use of the maps
 embedding the network in $\R^N$, hence the approach is intrinsically 1-dimensional. Moreover in our approach
appropriate assumptions at the transition vertices guarantee the continuity of the Hamiltonian  with respect to the state variable.\par The existence of a viscosity solution is obtained by
a representation formula involving a distance associated to the Hamiltonian (see \cite{cs2}, \cite{fs}, \cite{i2} for corresponding results on
connected domains), the solution turning out to be the maximal subsolution
of the problem. Uniqueness, on the other hand, relies on a comparison principle inspired by Ishii's classical argument for Eikonal equations \cite{i3}.
In this respect, the existence of a strict subsolution plays a key role.\par
An important and classical problem in graph theory is the \emph{shortest path problem}, i.e. the problem of  computing in a weighted graph the distance of the  vertices from a given target vertex
(\cite{bel}). The weights  represent the cost of running through the edges.
 A motivation of our work is to generalize the previous  problem to the case of a running cost  which varies in a continuous way along the edges. In this case the aim is  to compute the distance of any point of the graph from a given
target set and this  in practice corresponds to solve the Eikonal equation $|Du|=\alpha(x)$ on the network with a zero-boundary condition on the
target vertices. Moreover
Hamilton-Jacobi equations of Eikonal type are important in several fields, for example geometric optics \cite{be}, homogenization \cite{e1,lpv}, singular perturbation \cite{ab}, weak KAM theory \cite{e2,fa,fs}, large-time behavior \cite{i}, and mean field games theory \cite{ll}.\par
The paper is organized as follows: In section \ref{sec1} we introduce the definitions of topological networks and viscosity solutions.
In section \ref{sec2} we collect some basic properties of viscosity solutions, in particular stability with respect to uniform convergence.
Section \ref{sec3} is devoted to the study of a distance function associated to the Hamilton-Jacobi equation, while section \ref{sec4} presents
the proof of a comparison principle. In section \ref{sec5} the representation formula for the solution of the Dirichlet problem is given.

\begin{acknowledgement}
This work is based on earlier results contained in the Ph.D. thesis of the first author. He would like to express his gratitude to Prof. K. P. Hadeler  for the support
and the guidance during the completion of the thesis.
\end{acknowledgement}
\section{Assumptions and preliminary definitions}\label{sec1}
We start with the definition of a topological network.
\begin{definition}
Let $V=\{v_i,\,i\in I\}$ be a finite collection of pairwise different points in $\R^N$ and
let $\{\pi_j,\,j\in J\}$ be a finite collection of differentiable, non self-intersecting curves in $\R^N$ given by
\[\pi_j:[0,l_j]\to\R^N,\, l_j>0,\,j\in J.\]
Set $e_j:=\pi_j((0,l_j))$, $\bar e_j:=\pi_j([0,l_j])$, and $E:=\{e_j:\, j\in J\}$. Furthermore assume that
\begin{itemize}
  \item[i)] $\pi_j(0), \pi_j(l_j)\in V$ for all $j\in J$,
  \item[ii)]$\#(\bar e_j\cap V)=2$ for all $j\in J$,
  \item[iii)] $\bar e_j\cap \bar e_k\subset  V$, and $\#(\bar e_j\cap \bar e_k)\le 1$ for all $j,k \in J$, $j\neq k$.
  \item[iv)] For all $v, w \in V$ there is a path with end points  $v $ and $w$ (i.e.  a sequence of edges $\{e_j\}_{j=1}^N$ such that
  $\#(\bar e_j\cap \bar e_{j+1})=1$ and  $v\in \bar e_1$, $w\in \bar e_N$).
\end{itemize}
Then $\bar \G:=\bigcup_{j\in J}\bar e_j\subset \R^N$ is called a (finite)  \emph{topological network} in $\R^N$.
\end{definition}
If $v_i\in V\cap\bar e_j$ we say that $e_j$ is incident to $v_i$ ($ e_j\,\text{inc}\, v_i$ in short). For $i\in I$ we set
$Inc_i:=\{j\in J:\,e_j \,\text{inc}\,v_i\}$.
Observe that the parametrization of the arcs $e_j$ induces an orientation on the edges, which can be expressed
by the \emph{signed incidence matrix} $A=\{a_{ij}\}_{i,j\in J}$ with
\begin{equation}\label{1:1}
   a_{ij}:=\left\{
            \begin{array}{rl}
              1 & \hbox{if $v_i\in\bar e_j$ and $\pi_j(0)=v_i$,} \\
              -1 & \hbox{if $v_i\in\bar e_j$ and $\pi_j(l_j)=v_i$,} \\
              0 & \hbox{otherwise.}
            \end{array}
          \right.
\end{equation}
In the following we will study boundary value problems on $\bar\G$. Given a  nonempty set $I_B\subset I$, we define  $\partial \G:=\{v_i,\,i\in I_B\}$ to be the set of \emph{boundaries vertices}, while for   $I_T:=I\setminus I_B$   we call $\{v_i, \,i\in I_T\}$ the set of
\emph{transition vertices}.
We also set $\G:=\bar \G\setminus \partial\G$. We always assume $i\in I_B$ whenever $\#(Inc_i)=1$ for some $i\in I$.
We remark that in applications such as the  shortest path problem it is interesting to impose a boundary condition also
at the internal vertices, i.e. vertices with $\#(Inc_i)>1$.\par
 Consider the subspace topology induced to $\bar \G$ by $\R^N$. It coincides with the topology induced by the \emph{path distance}
\begin{equation}\label{1:dist}
   d(y,x):=\inf\left\{\int_0^t|\dot\g(s))|ds:\, t>0,\,\g\in B^t_{y,x}\right\}\qquad\text{for $x,y\in \bar\G$, where}
\end{equation}
\begin{itemize}
  \item[i)] $\g:[0,t]\to \G$ is a piecewise differentiable path in the sense that there are
$t_0:=0<t_1<\dots<t_{n+1}:=t$ such that for any $m=0,\dots,n$, we have $\g([t_m,t_{m+1}])\subset \bar e_{j_m}$ for some
$j_m\in J$, $\pi_{j_m}^{-1}\circ \g\in C^1(t_m,t_{m+1})$, and
\[\dot\g(s)=\frac{d}{ds}(\pi_{j_m}^{-1}\circ  \g)(s).\]
  \item[ii)]  $B^t_{y,x}$ is the set of all such paths with $\g(0)=y$, $\g(t)=x$.
\end{itemize}
For any function $u:\bar \G\to\R$ and each $j\in J$ we denote by $u^j$ the restriction of $u$ to $\bar e_j$, i.e.
\[u^j:=u\circ \pi_j:[0,l_j]\to \R.\]
We say that $u$ is continuous in $\bar\G$ and write $u\in C(\bar\G)$ if $u$ is continuous with respect to the subspace topology of $\bar\G$. This means
that $u^j\in C([0,l_j]$ for any $j\in J$ and
\[u^j(\pi_j^{-1}(v_i))=u^k(\pi_k^{-1}(v_i)) \qquad\text{for any $i\in I$, $j,k\in Inc_i$.}\]
In a similar way we define the space of upper semicontinuous functions  $\text{USC}(\bar\G)$ and the space of
lower semicontinuous functions $\text{LSC}(\bar\G)$, respectively, and the space $C(\Gamma)$.\\
We define differentiation along an edge $e_j$ by
\[\pd_ju(x):=\pd_j u^j(\pi_j^{-1}(x))=\frac{\pd}{\pd x} u^j(\pi_j^{-1}(x)),\qquad\text{for all $x\in e_j$,}\]
and at a vertex $v_i$ by
\[\pd_ju(v_i):=\pd_j u^j(\pi_j^{-1}(v_i))=\frac{\pd}{\pd x} u^j(\pi_j^{-1}(v_i))\qquad\text{for $j\in Inc_i$.}\]
A Hamiltonian $H:\bar \G\times \R\to \R$ of \emph{eikonal type}  is a collection $(H^j)_{j\in J}$ with $H^j: [0,l_j]\times\R\to \R$
 satisfying the following conditions:
\begin{align}
    &H^j\in C^0([0,l_j]\times \R), \quad j\in J,\label{1:H1}\\
    &H^j(x,p)\quad \text{is convex in $p\in \R$ for any $x\in [0,l_j]$, $j\in J$,}\label{1:H2}\\
   &H^j(x,p)\to +\infty\quad \text{as $|p|\to \infty$ for any $x\in [0,l_j]$, $j\in J$, }\label{1:H3}\\
   &H^j(\pi_j^{-1}(v_i),p)=H^k(\pi_k^{-1}(v_i),p)\quad\text{for any $p\in \R$, $i\in I$, $j,k \in Inc_i$, }\label{1:H4}\\
   &H^j(\pi_j^{-1}(v_i),p)=H^j(\pi_j^{-1}(v_i),-p)\quad\text{for any $p\in \R$, $i\in I$, $j\in Inc_i$. }\label{1:H5}
\end{align}
\begin{remark}\label{1:rem1}
Assumptions \eqref{1:H1}--\eqref{1:H3} provide standard conditions in the theory of viscosity solutions (see \cite{fs}, \cite{i2}). Assumptions
\eqref{1:H4}--\eqref{1:H5}  represent reasonable compatibility conditions of $H$  at the vertices of $\bar \G$, i.e.  continuity at the
vertices and independence of the orientation of the incident arc, respectively (the network is not oriented).
A typical example of a Hamiltonian satisfying these assumptions is  $H^j(x,p):=p^2-\alpha (x)$, $j\in J$, where $\alpha(x)=\alpha^j(\pi_j^{-1}(x))$
for $x\in \bar e_j$ and  $\alpha^j\in C^0([0,l_j])$, $\alpha^j(x)\ge0$ for $x\in\bar\G$, $\alpha^j(\pi_j^{-1}(v_i))=\alpha^k(\pi_k^{-1}(v_i))$ for any $i\in I$, $j,k\in Inc_i$.
\end{remark}
\begin{definition}\label{1:def2}
Let $\phi\in C(\G)$. \begin{itemize}
  \item[i)] Let $x\in e_j$, $j\in J$. We say that $\phi$ is differentiable at $x$, if $\phi^j$ is differentiable at $\pi_j^{-1}(x)$.
  \item[ii)] Let $x=v_i$, $i\in I_T$, $j,k\in Inc_i$, $j\neq k$. We say that $\phi$ is $(j,k)$-differentiable at $x$, if
\begin{equation}\label{1:2}
   a_{ij}\pd_j \phi_j(\pi_j^{-1}(x))+a_{ik}\pd_k \phi_k(\pi_k^{-1}(x))=0,
\end{equation}
where $(a_{ij})$ as in \eqref{1:1}. Moreover, we say that $\phi$ is differentiable at $x$ if $\phi$ is $(j,k)$-differentiable  at $x$
for any $j,k\in Inc_i$, $j\neq k$. \end{itemize}

\end{definition}
\begin{remark}
Condition \eqref{1:2} demands that the derivatives in the direction of the incident edges $j$ and $k$ at  the vertex $v_i$ coincide,
 taking into account the orientation of the edges.
\end{remark}
On topological networks we now introduce the definition of viscosity solutions $u$ of Hamilton-Jacobi equations of eikonal type of the form
\begin{equation}\label{HJ}
    H(x,Du)=0,\qquad x\in \G.
\end{equation}
\begin{definition}\label{1:def3}\hfill\\
A function $u\in\text{USC}(\bar\G)$ is called a (viscosity) subsolution of \eqref{HJ} in $\G$ if the following holds:
\begin{itemize}
  \item[i)] For any  $x\in e_j$, $j\in J$, and for any $\phi\in C(\G)$ which is differentiable at $x$ and for which
  $u-\phi$ attains a local maximum at $x$, we have
 \[H^j(\pi_j^{-1}(x), \pd_j \phi_j(\pi_j^{-1}(x))\le 0.\]
  \item [ii)] For any $x=v_i$, $i\in I_T$, and for any $\phi$ which is $(j,k)$-differentiable at $x$ and for which
  $u-\phi$ attains a local maximum at $x$, we have
 \[H^j(\pi_j^{-1}(x), \pd_j \phi_j(\pi_j^{-1}(x))\le 0.\]
\end{itemize}
 A function $u\in\text{LSC}(\bar\G)$ is called a (viscosity) supersolution of \eqref{HJ} in $\G$ if the following holds:
\begin{itemize}
  \item[i)] For any  $x\in e_j$, $j\in J$,  and for any $\phi\in C(\G)$ which is differentiable at $x$ and for which $u-\phi$ attains a
  local minimum at $x$, we have
 \[H^j(\pi_j^{-1}(x), \pd_j \phi_j(\pi_j^{-1}(x))\ge 0.\]
  \item [ii)] For any $x=v_i$, $i\in I_T$, $j\in Inc_i$, there exists $k\in Inc_i$, $k\neq j$, (which we will call $i$-feasible for $j$ at $x$) such that for any $\phi\in C(\G)$ which is
  $(j,k)$-differentiable
  at $x$ and for which $u-\phi$ attains a local maximum at $x$, we have
 \[H^j(\pi_j^{-1}(x), \pd_j \phi_j(\pi_j^{-1}(x))\ge 0.\]
\end{itemize}
A continuous function $u\in C(\G)$ is called a (viscosity) solution of  \eqref{HJ} if it is both a viscosity subsolution and a viscosity supersolution.
\end{definition}

\begin{remark}\label{1:r2}
\emph{i)} Let $i\in I_T$ and  $\phi\in C(\G)$  be $(j,k)$-differentiable at $x$. Then by \eqref{1:H4}-\eqref{1:H5}, we have
\begin{equation}\label{1:r21}
H^j(\pi_j^{-1}(x), \pd_j \phi(\pi_j^{-1}(x)))=H^k(\pi_k^{-1}(x),\pm \pd_k \phi(\pi_k^{-1}(x)));
\end{equation}
hence in  the subsolution condition, it is indifferent to require the condition for $j$ or for $k$. \\
\emph{ii)} To simplify the notation, we set
\[H^j(x,\pd_j\phi(x)):=H^j(\pi_j^{-1}(x), \pd_j \phi(\pi_j^{-1}(x))).\]
Moreover, we will call $\phi\in C(\G)$ an upper (lower) $(j,k)$-test function of $u$ at $x=v_i$ if it is
$(j,k)$-differentiable at $x$ and if $u-\phi$ attains a local maximum (minimum) at $x$.\\
\emph{iii)} It is important to observe the asymmetry in definition \ref{1:def3} regarding the subsolution and the supersolution conditions
at the transition vertices. It reflects the idea that distance functions have to be solutions  of \eqref{HJ}
and that there is always a shortest path from a transition vertex to the boundary. In fact it is worthwhile to observe that if
supersolutions were defined similarly
to subsolutions, the conditions in general would not be satisfied by $\inf\{d(y,x):\,y\in \pd \G\}$, which is, as we will see in
section \ref{sec5}, the
solution of $|Du|^2-1=0$ with zero boundary conditions.\\
\emph{iv)} Taking \eqref{1:r21} into account, it is easily seen that a viscosity solution $u$ of \eqref{HJ} satisfies
the equation in a pointwise sense at any point $x\in \G$ where
it is differentiable.
\end{remark}
As the conditions in definition \ref{1:def3} are of pointwise character, they can also be imposed on subsets of $\G$. Hence,
for $\Omega \subseteq \G$ we denote by  $\cS (\Omega)$ ($\cS^+ (\Omega)$ or $\cS^- (\Omega)$, respectively)
the space of solutions (supersolutions or  subsolutions, respectively) of \eqref{HJ}in $\Omega$. For short, we set $\cS:=\cS(\G)$, $\cS^+:=\cS^+(\G)$, and $\cS^-:=\cS^-(\G)$.
\section{Some basic properties of viscosity solutions}\label{sec2}
In this section we discuss some basic properties of the viscosity solutions introduced in the previous section.
\begin{proposition} \label{1:prop1}
Let $u$, $v$ be subsolutions (supersolutions) of \eqref{HJ} in $\G$.
Then $w :=\max\{u,v\}$ ($w :=\min\{u,v\}$) is a subsolution (a supersolution) of \eqref{HJ} in $\G$.
\end{proposition}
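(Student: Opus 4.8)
The plan is to prove the two statements separately, treating the subsolution/max case and the supersolution/min case in parallel, since the supersolution condition at transition vertices carries the crucial asymmetry. For each case I would distinguish points in the interior of an edge (where the claim is essentially the classical stability property of viscosity sub/supersolutions) from transition vertices (where the network-specific definition, and in particular the existence of an $i$-feasible edge, must be handled with care).

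For the subsolution case, let $w:=\max\{u,v\}$. Since $u,v\in\text{USC}(\bar\G)$, so is $w$. Suppose $\phi\in C(\G)$ is a test function (differentiable at an edge point, or $(j,k)$-differentiable at a transition vertex) for which $w-\phi$ attains a local maximum at $x$. The key observation is that $w(x)=u(x)$ or $w(x)=v(x)$; without loss of generality assume $w(x)=u(x)$. Then I would show that $u-\phi$ also attains a local maximum at $x$: for $y$ near $x$ we have $u(y)-\phi(y)\le w(y)-\phi(y)\le w(x)-\phi(x)=u(x)-\phi(x)$. Applying the subsolution property of $u$ (part i) for $x\in e_j$, part ii) for $x=v_i$) with the very same test function $\phi$ then yields $H^j(x,\pd_j\phi(x))\le 0$, which is exactly what is required for $w$. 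This argument is uniform across both interior points and transition vertices because the inequality $u\le w$ and the local-maximum transfer do not depend on the location of $x$.

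For the supersolution case, let $w:=\min\{u,v\}\in\text{LSC}(\bar\G)$. At an interior edge point the argument is symmetric to the above: if $w-\phi$ has a local minimum at $x\in e_j$ and, say, $w(x)=u(x)$, then $u-\phi$ has a local minimum at $x$ and the supersolution property of $u$ gives the conclusion. \textbf{The main obstacle} is the transition-vertex condition for the supersolution, because of its existential and asymmetric form: for each $j\in Inc_i$ one must produce an $i$-feasible edge $k$ such that \emph{every} lower $(j,k)$-test function of $w$ yields $H^j\ge 0$. At such an $x=v_i$ I would again pick the branch realizing the minimum, say $w(x)=u(x)$, and take the $i$-feasible $k$ for $j$ at $x$ supplied by the supersolution property of $u$. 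Then I must verify that any $\phi$ which is $(j,k)$-differentiable with $w-\phi$ attaining a local minimum at $x$ is an admissible lower test function for $u$: since $w\le u$ and $w(x)=u(x)$, the minimum transfers to $u-\phi$, so $u$'s supersolution condition applies and gives $H^j(x,\pd_j\phi(x))\ge 0$.

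The subtle point to get right is that the choice of which function attains the minimum at $x$ (hence which feasible edge $k$ is selected) may depend on $x$ and on $j$, but this is harmless: the definition only requires existence of \emph{some} feasible $k$ for each $j$ at each fixed vertex, and the $k$ furnished by whichever of $u,v$ realizes $w(x)$ serves this role. I would also note that the convention $w(x)=u(x)$ versus $w(x)=v(x)$ may force a case split, but by the symmetry of the roles of $u$ and $v$ it suffices to treat one case. No deeper machinery is needed: the entire proof rests on the elementary monotonicity $u\le\max\{u,v\}$ (respectively $\min\{u,v\}\le u$) together with the transfer of local extrema, so I expect the write-up to be short once the feasible-edge bookkeeping at transition vertices is stated cleanly.
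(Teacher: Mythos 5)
Your proposal is correct and follows essentially the same route as the paper's proof: in both cases one picks the function realizing $w(x)$ (say $u$), observes that the local extremum transfers from $w-\phi$ to $u-\phi$ via the monotonicity $u\le w$ (resp.\ $w\le u$), and in the supersolution case reuses the $i$-feasible edge $k$ furnished by $u$ at the transition vertex. The only difference is that you spell out the extremum-transfer inequality explicitly, which the paper leaves implicit.
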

\begin{proof}
We only consider the case $x=v_i$, $i\in I$, as otherwise the argument is standard.\\
Let $u,v\in\text{USC}(\G)$ be two subsolutions at $x$ and observe that $w=\max\{u,v\}\in \text{USC}(\G)$. Let $j,k\in Inc_i$ and
let $\phi$ be an upper $(j,k)$-test function of $w$ at $x$. If $w(x)=u(x)$ (similarly in the other case) then $\phi$ is an
  upper $(j,k)$-test function of $u$ at $x$, implying
\[H^j(x,\pd_j \phi(x))\le 0.\]
Hence $w$ is a subsolution.\\
Let $u,v\in\text{LSC}(\G)$ be two supersolutions at $x$, whence $w=\min\{u,v\}\in LSC(\G)$. Assume $w(x)=u(x)$ (similarly in the other case).
Hence for any $j\in Inc_i$ there exists $k\in Inc_i$, $k\neq j$, such that for any
 lower $(j,k)$-test function $\phi$ of $u$ at $x$ we have
\[H^j(x,\pd_j \phi(x))\ge 0.\]
Hence $k$ is $i$-feasible for $j$ also with respect to $w$.
\end{proof}

\begin{proposition}\label{2:prop2}
Assume $H_n(x,p)\to H(x,p)$ uniformly for $n\to \infty$ (i.e. $H_n^j(\pi_j^{-1}(x),p)\to H^j(\pi_j^{-1}(x),p)$
uniformly for $(x,p)\in \bar e_j\times \R$
for any $j\in J$). For any $n\in \N$ let $u_n$ be a solution of
\begin{equation}\label{HJn}
   H_n(x, D u)=0,\qquad x\in \G,
\end{equation}
and assume $u_n\to u$ uniformly in $\G$ for $n\to\infty$. Then $u$ is  a solution of  \eqref{HJ}.
\end{proposition}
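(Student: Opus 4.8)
The plan is to adapt the classical stability argument for viscosity solutions (as in \cite{fs}, \cite{i2}) to the network setting, handling the edge points and the transition vertices separately and paying particular attention to the asymmetric supersolution condition at transition vertices in Definition \ref{1:def3}. The core mechanism is the standard perturbed-test-function device: given a test function $\phi$ touching $u$ at a point, I produce nearby points $x_n$ where $u_n-\phi$ (suitably corrected) attains a local extremum, apply the subsolution/supersolution inequality for $H_n$ at $x_n$, and pass to the limit using the uniform convergence $H_n\to H$ together with the continuity of $\phi$ and the convergence $x_n\to x$.

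\emph{Subsolution property.} First I would verify that the limit $u$ is a subsolution. Fix either $x\in e_j$ and a $\phi\in C(\G)$ differentiable at $x$, or $x=v_i$ with $i\in I_T$ and a $\phi$ that is $(j,k)$-differentiable at $x$, such that $u-\phi$ attains a local maximum at $x$. I may assume the maximum is strict (replacing $\phi$ by $\phi$ plus a small penalization that vanishes at $x$ and is positive elsewhere, compatibly with the differentiability requirement). Since $u_n\to u$ uniformly and $u-\phi$ has a strict local maximum at $x$, the functions $u_n-\phi$ attain local maxima at points $x_n$ with $x_n\to x$; for $n$ large these points lie on the same edge $e_j$ (or coincide with, or lie near, $v_i$). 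Because $u_n$ is in particular a subsolution of \eqref{HJn} and $\phi$ is an admissible (upper $(j,k)$-)test function at $x_n$, I obtain $H^j_n(x_n,\pd_j\phi(x_n))\le 0$. Letting $n\to\infty$, the uniform convergence of $H_n$ to $H$ on $\bar e_j\times\R$, the continuity of $H^j$, and the convergence $\pd_j\phi(x_n)\to\pd_j\phi(x)$ (continuity of the derivative of the test function) yield $H^j(x,\pd_j\phi(x))\le 0$. This gives both parts i) and ii) of the subsolution definition for $u$.

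\emph{Supersolution property.} The delicate half is the supersolution condition at a transition vertex $x=v_i$, $i\in I_T$, because Definition \ref{1:def3} requires, for each $j\in Inc_i$, the \emph{existence} of an $i$-feasible $k\neq j$ validating the inequality. Fix $j\in Inc_i$. Each $u_n$ is a supersolution, so it selects some $i$-feasible index $k_n\in Inc_i$ for $j$. Since $Inc_i$ is finite, after passing to a subsequence I may assume $k_n\equiv k$ is constant; this fixed $k$ is my candidate $i$-feasible index for $u$. Now given any lower $(j,k)$-test function $\phi$ of $u$ at $x$ (again reducing to a strict local minimum by penalization), uniform convergence produces local minima $x_n\to x$ of $u_n-\phi$; applying the supersolution inequality for $H_n$ along this fixed feasible pair gives $H^j_n(x_n,\pd_j\phi(x_n))\ge 0$, and passing to the limit as above yields $H^j(x,\pd_j\phi(x))\ge 0$. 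The edge points $x\in e_j$ and part i) of the supersolution definition are handled exactly as in the subsolution case with maxima replaced by minima.

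The main obstacle I anticipate is precisely the feasibility selection at transition vertices: the $i$-feasible neighbor can in principle change with $n$, so the argument genuinely relies on the finiteness of $Inc_i$ to extract a constant subsequence and thereby fix a single $k$ that works for the limit. A secondary technical point is the strict-extremum reduction, which must be carried out so that the perturbing function preserves differentiability (on an edge) or $(j,k)$-differentiability (at a vertex); choosing the penalization of the form $|x-x|^2$ read through the local chart $\pi_j^{-1}$ keeps the relevant derivative unchanged at $x$ and is compatible with condition \eqref{1:2}.
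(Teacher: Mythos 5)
Your overall strategy coincides with the paper's: penalized test functions, extraction of extremum points converging to the test point, passage to the limit via the uniform convergence of $H_n$ and $u_n$, and — for the supersolution condition at a transition vertex — extraction of a subsequence along which the $i$-feasible index $k_n$ equals a fixed $k$, which you correctly single out as the crux (the paper runs this step by contradiction, you run it directly; the mechanism is identical).

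However, there is a genuine gap in how you localize the extremum points at a transition vertex. You assert that the local maxima $x_n$ of $u_n-\phi$ ``lie on the same edge $e_j$ (or coincide with, or lie near, $v_i$)'' and then write the inequality uniformly as $H^j_n(x_n,\pd_j\phi(x_n))\le 0$. Neither claim is justified. A neighborhood of $x=v_i$ in $\G$ contains pieces of \emph{all} incident edges, so the maximum of $u_n-\phi$ over such a neighborhood may be attained on an edge $e_l$ with $l\in Inc_i\setminus\{j,k\}$. On $e_l$ the function $\phi$ is merely continuous — $(j,k)$-differentiability at $v_i$ imposes nothing there — so no viscosity inequality for $u_n$ is available at such a point; and even granting smoothness of $\phi$ on $e_l$, the resulting inequality would involve $\pd_l\phi$, which condition \eqref{1:2} does not relate to $\pd_j\phi(x)$, so nothing useful passes to the limit. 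This is precisely why the paper restricts the maximization to the compact set $\bar B_r(x)\cap(\bar e_j\cup\bar e_k)$: there the maximum point $y_n$ is either equal to $x$ or interior to $e_j$ or $e_k$, and in each case a usable inequality holds. Note also that when $y_n\in e_k$ the inequality one obtains is $H^k_n(y_n,\pd_k\phi(y_n))\le 0$, not $H^j_n(\ldots)\le 0$ (indeed $\pd_j\phi(x_n)$ is not even defined for $x_n\notin\bar e_j$); to conclude $H^j(x,\pd_j\phi(x))\le 0$ in the limit one must invoke the vertex compatibility conditions \eqref{1:H4}--\eqref{1:H5} together with the $(j,k)$-differentiability of $\phi$, i.e. \eqref{1:r21} of Remark \ref{1:r2}. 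The same restriction to $\bar e_j\cup\bar e_k$, the same three-way case analysis, and the same use of \eqref{1:r21} are needed for the minimum points in your supersolution argument. These are exactly the ingredients your sketch omits; once they are inserted, your proof becomes the paper's.
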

\begin{proof}
We treat the case $x=v_i$, $i\in I_T$, as otherwise the argument is standard (see \cite{bcd}).\\
We first prove that $u$ is a \emph{subsolution}.
 Choose any $j,k\in Inc_i$, $j\neq k$, along with
an upper $(j,k)$-test function $\phi$   of $u$ at $x$. Consider the auxiliary function $\phi_\d(y):=\phi (y)+\d d(x,y)^2$ for $\d>0$.
Observe that $\pd_m ( d(x,\cdot)^2)(\pi_m^{-1}(x))=0$ for    $m=j$ and $m=k$, hence $d(x,\cdot)^2$ is $(j,k)$ differentiable at $x$.
Then $\phi_\d$ is an upper $(j,k)$-test function  of $u$ at $x$ and there exists $r>0$ such that $u-\phi_\d$ attains a strict local
maximum w.r.t. $\bar B_r(x)$ at $x$, where $B_r(x):=\{y\in\G:\, d(x,y)<r\}$. Observe that $x$ is a strict maximum point
for $u-\phi_\d$ also in $\bar B:=\bar B_r(x)\cap (\bar e_j \cup\bar e_k)$. Now choose a sequence $\omega_n\to 0$ for $n\to\infty$ with
\begin{equation}\label{2:prop21}
   \sup_{\G}|u(x)-u_n(x)|\le \omega_n
\end{equation}
and let $y_n$ be a maximum point for $u_n-\phi_\delta$ in $\bar B$. Up to  a subsequence, $y_n\to z\in \bar B$. Moreover,
\[u(x)-\phi_\d(x)-\omega_n\le u_n(x)-\phi_\d(x)\le u_n(y_n)-\phi_\d(y_n)\le u(y_n)-\phi_\d(y_n)+\omega_n.\]
For $n\to \infty$, we get
$u(x)-\phi_\d(x)\le u(z)-\phi_\d(z).$
As $x$ is a strict maximum point, we conclude $x=z$. Invoking
\[u(x)+\phi_\d(y_n)-\phi_\d(x)-\omega_n\le u_n(y_n)\le u(y_n)+\omega_n\]
we altogether get
\begin{equation}\label{2:prop22}
\lim_{n\to\infty} y_n= x, \quad    \lim_{n\to\infty}u_n(y_n)=u(x)
\end{equation}
We distinguish two cases:\\
\emph{Case 1: $y_n\neq x$. } Then $y_n\in e_m$ with either $m=j$ or $m=k$. Since $u_n-\phi_\d$ attains a maximum at $y_n$ and as
$\phi_\d$ is differentiable at $y_n$ with $\pd_m \phi_\d(y_n)=\pd_m\phi(y_n)+2\d a_{im}d(x,y_n)$, we have
\begin{equation}\label{2:prop23}
   H^m_n( y_n,\pd_m \phi(y_n)+2\d a_{im}d(x,y_n))\le 0.
\end{equation}
with either $m=j$ or $m=k$.\\
\emph{Case 2: $y_n= x$. } Then $\pd_m \phi_\d(y_n)=\pd_m \phi(y_n)$ for $m=j$ and $m=k$ and therefore
\begin{equation}\label{2:prop24}
    H^j_n(y_n,\pd_j\phi_\d(y_n))\le 0.
\end{equation}
By \eqref{2:prop22}, \eqref{2:prop23}, \eqref{2:prop24} and recalling \eqref{1:r21} we get for $n\to\infty$
\[ H^j(x,\pd_j\phi(x))\le 0.\]\\
To show  that $u$ is a \emph{supersolution}, we
 assume by contradiction that  there exists $j\in Inc_i$ such that for any $k\in Inc_i$, $k\neq j$, there exists
 a lower $(j,k)$-test function $\phi_k$ of $u$ at $x$ for which
 \begin{equation}\label{2:prop25}
    H^j(x,\pd_j\phi_k(x))<0.
 \end{equation}
By adding a quadratic function of the form $-\a_kd(x,y)^2$ to the function $\phi_k$ we may assume that
there exists $r>0$ such that $u-\phi_k$ attains a strict minimum in $\bar B_r(x)$ at $x$. Observe that $x$ is a strict minimum point
of $u-\phi_k$ also in $\bar B_k:=\bar B_r(x)\cap (\bar e_j \cup\bar e_k)$.
Now for any $n\in\N$ there exists $k_n\in Inc_i$, $k_n\neq j$, which is $i$-feasible for  j with respect to $u_n$. Up to a subsequence,
we may assume that there exists $k\in Inc_i$ such that $k_n=k$ for any $n$.\\
 Let $y_n$ be a minimum point of $u_n-\phi_k$ in $\bar B_k$ and let $\omega_n$ be as in \eqref{2:prop21}.
 Similarly to the subsolution case, we can prove that \eqref{2:prop22} holds.
If $y_n\neq x$, we obtain
\begin{equation}\label{2:prop27}
   H^m_n( y_n,\pd_j \phi_k(y_n))\ge 0
\end{equation}
for  either $m=j$ or $m=k$. If $y_n= x$, we get
\begin{equation}\label{2:prop28}
    H^j_n(y_n,\pd_j\phi_k(y_n))\ge 0.
\end{equation}
Hence by \eqref{2:prop22}, \eqref{2:prop27}, and \eqref{2:prop28}, we get for $n\to \infty$
\[ H^j(x,\pd_j\phi_k(x))\ge 0,\]
which is a contradiction to \eqref{2:prop25}.
\end{proof}
The proof of the following proposition is given in \cite[Prop.II.4.1]{bcd}, for example.
\begin{proposition}
Let $K$ be a compact subset of $\G$ and let $u\in \cS^-(K)$. Then there exists a constant $C_K$ depending only on $K$ such that
\begin{equation}\label{2:prop1}
|u(x)-u(y)|\le C_Kd(x,y).
\end{equation}
\end{proposition}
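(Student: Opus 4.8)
The plan is to turn the coercivity hypothesis \eqref{1:H3} into a uniform bound on admissible gradients and then deduce the Lipschitz estimate edge by edge, gluing the pieces along the intrinsic distance $d$. First I would fix the constant. Writing $K_j:=\pi_j^{-1}(K\cap \bar e_j)\subset[0,l_j]$, the coercivity \eqref{1:H3} together with the continuity \eqref{1:H1} and the compactness of each $K_j$ shows that the set $\{(x,p)\in K_j\times\R:\,H^j(x,p)\le 0\}$ is bounded in $p$; since $J$ is finite I may set
\[ C_K:=\max_{j\in J}\,\sup\{|p|:\ x\in K_j,\ H^j(x,p)\le 0\}<\infty, \]
so that $H^j(x,p)>0$ whenever $x\in K_j$ and $|p|>C_K$. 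This is the Lipschitz constant I will produce, and it manifestly depends only on $K$ (through the $K_j$) and on $H$.

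The core estimate is one-dimensional. Fix an edge $e_j$ and a point $x_0\in e_j\cap K$, set $s_0:=\pi_j^{-1}(x_0)$, and compare $u^j$ with the affine test function $s\mapsto u^j(s_0)+(C_K+\e)|s-s_0|$ of slope just above the threshold. Restricted to the open edge, $u^j$ is a one-dimensional viscosity subsolution of $H^j(x,(u^j)')\le 0$ (this is exactly item i) of Definition \ref{1:def3} read at interior points), so at any interior maximum of $u^j$ minus this affine function the subsolution inequality would read $H^j(\cdot,\pm(C_K+\e))\le 0$, contradicting the choice of $C_K$. Hence the maximum must sit at the base point, which gives the one-sided bound $u^j(s)-u^j(s_0)\le (C_K+\e)|s-s_0|$; letting $\e\to 0$ and exchanging the two points yields $|u^j(s)-u^j(s')|\le C_K|s-s'|$ on the interior of each edge, and by upper semicontinuity up to its endpoints. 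Since the intrinsic distance $d$ restricted to a single edge is precisely the difference of the parameters, this is exactly $|u(x)-u(y)|\le C_K\,d(x,y)$ for $x,y$ on a common edge.

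The step I expect to be delicate is the passage through the transition vertices, which is where the network structure genuinely enters. Two things must be checked there. First, upper semicontinuity forbids a downward jump of $u$ at a vertex $v_i$, but a priori it permits an \emph{upward} spike $u(v_i)>\lim_{s\to 0}u^j(s)$ along some incident edge $j$; the per-edge bounds glue into a global estimate only if such spikes are excluded, i.e.\ only if $u$ is continuous at $v_i$. This is precisely what the vertex subsolution condition of Definition \ref{1:def3}(ii) rules out: if a gap were present along edge $j$, one could build an upper $(j,k)$-test function $\phi$ which still dominates $u$ on every incident edge (using the gap on edge $j$ and large positive slopes on the other edges) while forcing $|\pd_j\phi(v_i)|$ arbitrarily large through the constraint \eqref{1:2}; coercivity \eqref{1:H3} then gives $H^j(v_i,\pd_j\phi(v_i))>0$, contradicting the inequality required at $v_i$. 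Once continuity at the vertices is secured, I would globalize: for arbitrary $x,y\in K$ take a path $\g$ of length at most $d(x,y)+\e$, subdivide it at the vertices it meets into finitely many edge-segments, apply the per-edge estimate to each segment, and sum using the triangle inequality for $u$ together with continuity at the intermediate vertices; this bounds $|u(x)-u(y)|$ by $C_K$ times the length of $\g$, and letting $\e\to 0$ yields the claim. The only remaining care is that the chosen near-minimizing path stay in a region where $u$ is a subsolution, which one arranges by first establishing the estimate locally with the uniform constant $C_K$ and then chaining over a finite subcover of the compact set $K$.
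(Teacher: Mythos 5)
The paper offers no proof of this proposition at all --- it simply cites \cite[Prop.\ II.4.1]{bcd} --- so your attempt must be judged against the classical argument that citation stands for. Your architecture is the natural one: a gradient bound from coercivity, a one-dimensional cone comparison on each edge, exclusion of upward jumps at transition vertices, and chaining along near-minimizing paths (which, since paths are by definition confined to $\G$, cross only transition vertices, where Definition \ref{1:def3}(ii) is available). Moreover the genuinely network-specific step, your vertex argument, is essentially sound: a gap along $e_j$ lets you prescribe an arbitrarily steep decrease of $\phi$ into $e_j$, the constraint \eqref{1:2} transmits that slope to $e_k$, and coercivity gives the contradiction. Note, however, that for $u-\phi$ to have a local maximum at $v_i$ you must dominate $u$ on \emph{all} incident edges, which requires the per-edge Lipschitz estimate on those edges (plus upper semicontinuity), not merely ``large positive slopes''; so this step consumes the core estimate.

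The genuine gaps are in that core estimate. First, the contradiction at the touching point is invalid as written: $C_K$ is a supremum over $K_j$ only, but the maximum point $s^*$ of $u^j-\bigl(u^j(s_0)+(C_K+\e)|s-s_0|\bigr)$ need not lie in $K_j$ (nor, under the literal reading $u\in\cS^-(K)$, need $u$ be a subsolution at $s^*$), and at points outside $K_j$ the sublevel set $\{p:\,H^j(s^*,p)\le 0\}$ may well contain $\pm(C_K+\e)$. This is repairable by taking the supremum over all of $[0,l_j]$ (indeed over the compact set $\bar\G$), but the finiteness of that supremum is not a consequence of \eqref{1:H1} and compactness alone, as you assert: on $[0,1]\times\R$ the function $H(x,p)=\min\{|p-1/x|,|p|\}$ for $x>0$, $H(0,p)=|p|$, is continuous and coercive in $p$ for each fixed $x$, yet $H(x,1/x)=0$; you need the convexity \eqref{1:H2} to exclude sublevel intervals escaping to infinity. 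Second, and more seriously, the dichotomy ``interior maximum (contradiction) or maximum at the base point'' omits the possibility that the maximum sits at an endpoint of the interval over which you maximize: there it is not a local maximum with respect to a two-sided neighborhood, so no test-function inequality is available, and on the closed edge the endpoints are vertices, where your cone does not extend to an admissible $(j,k)$-test function because you cannot guarantee domination of $u$ on the other incident edges. Ruling out this escape is precisely the crux of the classical proof: one first proves a Lipschitz bound whose constant involves the oscillation of $u$ (this also disposes of the fact that $u$ is merely upper semicontinuous), and then improves it to the coercivity-only constant by observing that the resulting function is differentiable a.e., that at such points the derivative lies in the superdifferential, so that $H^j\bigl(s,(u^j)'(s)\bigr)\le 0$ and hence $|(u^j)'|\le C$ a.e., and by integrating along the edge. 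As written, ``hence the maximum must sit at the base point'' is a non sequitur, and without it neither the per-edge bound nor the vertex argument that relies on it is established.
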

The proof of the next two propositions is very similar to the one of Prop.\ref{2:prop2}.
\begin{proposition}\label{2:prop3bis}
Let $\cT\subset \cS^- $ ($\cT\subset \cS^+ $) and set $u(x):=\sup\{v(x)|\,v\in \cT\}$ ($u(x):=\inf\{v(x)|\,v\in \cT\}$) for $x\in \G$.
Suppose that $u\in C(\G)$. Then $u\in \cS^- $ ($u\in \cS^+$).
\end{proposition}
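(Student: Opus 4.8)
The plan is to imitate closely the proof of Proposition \ref{2:prop2}, replacing the role of the uniform convergence $u_n\to u$ by the monotone relations $v\le u$ (for the supremum) and $v\ge u$ (for the infimum) that every member $v\in\cT$ satisfies by the definition of $u$. Since the conditions in Definition \ref{1:def3} at interior points $x\in e_j$ are local and classical, the only case requiring work is $x=v_i$ with $i\in I_T$; I would treat the supremum/subsolution statement first.

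First I would fix $j,k\in Inc_i$, $j\neq k$, and an upper $(j,k)$-test function $\phi$ of $u$ at $x$, and set $\phi_\d(y):=\phi(y)+\d d(x,y)^2$. As in Proposition \ref{2:prop2}, $d(x,\cdot)^2$ is $(j,k)$-differentiable at $x$ with vanishing $j$- and $k$-derivatives, so $\phi_\d$ is again an upper $(j,k)$-test function with $\pd_j\phi_\d(x)=\pd_j\phi(x)$, and for small $r>0$ the point $x$ is a \emph{strict} maximum of $u-\phi_\d$ on the compact set $\bar B:=\bar B_r(x)\cap(\bar e_j\cup \bar e_k)$. Next I would choose $v_n\in\cT$ with $v_n(x)\to u(x)$ and let $y_n$ be a maximum point of $v_n-\phi_\d$ on $\bar B$. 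Using $v_n\le u$, the continuity of $u$, and the strictness of the maximum exactly as in the derivation of \eqref{2:prop22}, one shows $y_n\to x$ and $v_n(y_n)\to u(x)$. Then one distinguishes the two cases $y_n\neq x$ (so $y_n$ lies in the interior of $e_j$ or $e_k$, where the edge subsolution condition for $v_n\in\cS^-$ applies) and $y_n=x$ (where the vertex subsolution condition for $v_n$ applies); passing to the limit $n\to\infty$ using the continuity of $H^j$ and \eqref{1:r21} yields $H^j(x,\pd_j\phi(x))\le 0$. Note that, since every $v_n$ solves the \emph{same} equation, no uniform convergence of Hamiltonians is needed here.

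For the infimum/supersolution statement I would argue by contradiction following the second half of Proposition \ref{2:prop2}: suppose there is $j\in Inc_i$ such that for every $k\neq j$ there is a lower $(j,k)$-test function $\phi_k$ of $u$ at $x$ with $H^j(x,\pd_j\phi_k(x))<0$, the strict minimum at $x$ being arranged by subtracting $\a_k d(x,\cdot)^2$. I would choose $v_n\in\cT$ with $v_n(x)\to u(x)$; each $v_n\in\cS^+$ supplies an $i$-feasible index $k_n$ for $j$, and after passing to a subsequence I may assume $k_n\equiv k$. Taking $y_n$ to be a minimum of $v_n-\phi_k$ on $\bar B_k:=\bar B_r(x)\cap(\bar e_j\cup\bar e_k)$ and using $v_n\ge u$ together with continuity gives the analogue of \eqref{2:prop22}; the two cases $y_n\neq x$ and $y_n=x$ (the latter invoking the supersolution condition along the feasible direction $k$ for $v_n$) then produce $H^j(x,\pd_j\phi_k(x))\ge 0$, contradicting \eqref{2:prop25}.

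The step I expect to be most delicate is the infimum case, where the $i$-feasible direction is attached to each individual $v_n$ rather than to $u$: one must first extract the constant feasible index $k$ from the sequence, and only then select, from the contradiction hypothesis, the matching test function $\phi_k$, so that the vertex supersolution inequality for $v_n$ can be applied along exactly that direction. Keeping the quantifier order (fix $j$; obtain $k$ from feasibility; use $\phi_k$) consistent with Definition \ref{1:def3} is the crux; the convergence bookkeeping $y_n\to x$, $v_n(y_n)\to u(x)$ is routine once $v\le u$ (resp. $v\ge u$) replaces the uniform estimate \eqref{2:prop21}.
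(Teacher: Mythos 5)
Your proposal is correct and follows essentially the same route as the paper: the paper's proof of the supremum/subsolution case uses exactly your construction (perturbation $\phi_\d=\phi+\d d(x,\cdot)^2$, approximating sequence $u_n\in\cT$ with $u_n(x)\to u(x)$, maximum points $y_n$ on $\bar B_r(x)\cap(\bar e_j\cup\bar e_k)$, the two cases $y_n\neq x$ and $y_n=x$), and for the infimum/supersolution case it simply defers to the contradiction argument of Proposition \ref{2:prop2}, which you spell out correctly, including the key point of first extracting a constant $i$-feasible index $k$ from the sequence $k_n$ and only then invoking the test function $\phi_k$ from the contradiction hypothesis.
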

\begin{proof}
To prove that $u(x)=\sup\{v(x)|\,v\in \cT\}$ is a subsolution, we only consider the case $x=v_i$, $i\in I_T$. Consider $j,k\in Inc_i$, $j\neq k$, and
an upper $(j,k)$-test function $\phi$   of $u$ at $x$. Set $\phi_\d(y):=\phi (y)+\d d(x,y)^2$ for $\d>0$.
Then $\phi_\d$ is an upper $(j,k)$-test function  of $u$ at $x$ and there exists $r>0$ such that $u-\phi_\d$ has
a strict local maximum point in $\bar B_r(x)$ at $x$. Observe that $x$ is a strict maximum point
for $u-\phi_\d$ also in $\bar B:=\bar B_r(x)\cap (\bar e_j \cup\bar e_k)$. Let $u_n\in S$ be such that
\[
   u(x)-u_n(x)\le \frac{1}{n}
\]
and let $y_n$ be a maximum point for $u_n-\phi_\delta$ in $\bar B$. Up to  a subsequence, $y_n\to z\in \bar B$. Moreover,
\[u(x)-\phi_\d(x)-\frac{1}{n}\le u_n(x)-\phi_\d(x)\le u_n(y_n)-\phi_\d(y_n)\le u(y_n)-\phi_\d(y_n).\]
For $n\to \infty$, we obtain $u(x)-\phi_\d(x)\le u(z)-\phi_\d(z)$, implying $x=z$, as $x$ is a strict maximum point. Moreover, by
\[u(x)+\phi_\d(y_n)-\phi_\d(x)\le u_n(y_n)\le u(y_n)\]
we get
\[
\lim_{n\to\infty} y_n= x, \quad    \lim_{n\to\infty}u_n(y_n)=u(x),
\] and we conclude as in Proposition \ref{2:prop2}.\\
Similarly to the proof of Proposition \ref{2:prop2} one can also show that $u(x):=\inf\{v(x)|\,v\in \cT\}$ is a supersolution.
\end{proof}
\begin{proposition}\label{2:prop3}
Let $\cT\subset \cS$ and let $u(x):=\inf\{v(x)|\,v\in \cT\}$ for $x\in \G$. Assume that $u(x)\in \R$ for some $x\in\G$. Then $u\in \cS$.
\end{proposition}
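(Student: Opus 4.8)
The plan is to verify separately that $u$ is a supersolution and a subsolution, after first establishing that $u\in C(\G)$. For the continuity I would exploit that every $v\in\cT$ is in particular a subsolution, so that on each compact $K\subset\G$ the Lipschitz estimate \eqref{2:prop1} holds with one and the same constant $C_K$ for all $v\in\cT$. Together with the hypothesis $u(x_0)\in\R$ at some $x_0$, the bound $v(x)\ge v(x_0)-C_Kd(x,x_0)\ge u(x_0)-C_Kd(x,x_0)$ shows that $u$ is finite everywhere; and, being the infimum of a family of uniformly $C_K$-Lipschitz functions, $u$ is itself $C_K$-Lipschitz on $K$. Hence $u$ is locally Lipschitz and $u\in C(\G)$. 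Once continuity is known, the supersolution property is immediate: each $v\in\cT$ is a supersolution, so $u=\inf\{v:\,v\in\cT\}$ is a supersolution by Proposition \ref{2:prop3bis}.

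The subsolution property is the real obstacle, since the infimum of subsolutions is in general \emph{not} a subsolution. The localization-of-extrema argument of Propositions \ref{2:prop2} and \ref{2:prop3bis} squeezes a maximizing point only for suprema: for an infimum the inequality $u_n\ge u$ yields a lower, not an upper, bound on $u_n-\phi$ at the candidate point, so the maximum cannot be localized near $x$ and the argument does not transplant. The cure is to use the convexity and coercivity of $H$ in an essential way (without them the statement is false).

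Concretely, I would fix an edge $e_j$ and identify it with $(0,l_j)$. By \eqref{1:H1}--\eqref{1:H3} the sublevel set $Z(x):=\{p:\,H^j(x,p)\le0\}$ is a compact interval $[p_-(x),p_+(x)]$, nonempty and depending continuously on $x$, and every $v\in\cT$, being a subsolution, satisfies $(v^j)'(x)\in Z(x)$ for a.e. $x$, whence $\int_s^t p_-\le v^j(t)-v^j(s)\le\int_s^t p_+$. As $u^j$ is Lipschitz it is differentiable a.e.; at a differentiability point $x$ I would pick, for each $h>0$, a near-minimizer $v_h$ with $v_h^j(x)<u^j(x)+h^2$ and bound the forward and backward difference quotients of $u^j$ by those of $v_h^j$, obtaining in the limit $p_-(x)\le(u^j)'(x)\le p_+(x)$. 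Thus $H^j(x,\pd_j u(x))\le0$ for a.e. $x\in e_j$, and the standard equivalence, for convex Hamiltonians, between a.e. and viscosity subsolutions (see \cite{bcd}) upgrades this to the viscosity subsolution property on the open edge.

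It remains to check the subsolution condition at a transition vertex $x=v_i$. Here I would invoke the identity \eqref{1:r21}: for a $(j,k)$-differentiable test function the requirement $H^j(x,\pd_j\phi(x))\le0$ is symmetric in $j$ and $k$ and coincides with the one-dimensional subsolution condition along the concatenated edge $\bar e_j\cup\bar e_k$, on which the Hamiltonian is continuous, convex and coercive (and even in $p$ at $v_i$ by \eqref{1:H4}--\eqref{1:H5}). Since $H(x,\pd u(x))\le0$ holds a.e. on both incident edges, the same equivalence for convex Hamiltonians yields the viscosity subsolution property at the interior point $v_i$ of this concatenated edge, which is exactly the condition demanded by Definition \ref{1:def3}. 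The main difficulty, and the step deserving the most care, is thus this subsolution part: carrying out the difference-quotient gradient bound, verifying the continuity of $p_\pm$, and gluing the a.e.-to-viscosity passage across the vertices.
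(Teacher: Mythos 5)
Your proof is correct, and it diverges from the paper's exactly at the decisive point. The preliminaries coincide: the paper also obtains finiteness and Lipschitz continuity of $u$ from \eqref{2:prop1}, and the supersolution property from Proposition \ref{2:prop3bis}. For the subsolution property, however, the paper does precisely what you argue cannot be done: it re-runs the localization argument of Proposition \ref{2:prop2}, choosing $u_n\in\cT$ with $u(x)-u_n(x)\ge -1/n$, letting $y_n$ maximize $u_n-\phi_\d$ on $\bar B$, and writing
\begin{equation*}
u(x)-\phi_\d(x)\le u_n(x)-\phi_\d(x)\le u_n(y_n)-\phi_\d(y_n)\le u(y_n)-\phi_\d(y_n)+\frac{1}{n}.
\end{equation*}
The last inequality needs $u_n(y_n)\le u(y_n)+1/n$, which the selection of $u_n$ at the single point $x$ justifies for a supremum (as in Proposition \ref{2:prop3bis}) but not for an infimum, where one only knows $u_n\ge u$ --- exactly the asymmetry you identify. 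Consistently with this, the paper's argument nowhere uses convexity \eqref{1:H2}, whereas convexity is indispensable: on a single edge with $H(x,p)=\min\{|p-2|,|p+2|\}-1$ (continuous, coercive, even, but not convex), $u_1(t)=t$ and $u_2(t)=l_1-t$ are viscosity solutions while $\min\{u_1,u_2\}$ fails the subsolution test at the kink, since a constant test function touches from above there and $H(0)=1>0$. So your route --- convexity exploited through the a.e.\ characterization of subsolutions --- is the one that actually carries the load; it is the classical argument for infima of solutions of convex Hamilton--Jacobi equations (cf.\ \cite{bcd}), adapted to the network via your observation that a $(j,k)$-test function at $v_i$ is an ordinary one-dimensional test function at an interior point of the concatenated edge $\bar e_j\cup\bar e_k$, whose glued Hamiltonian is continuous, convex and coercive thanks to \eqref{1:H4}--\eqref{1:H5}.

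Two details in your sketch deserve a word. Nonemptiness of $Z(x)$: any $v\in\cT$ is $C$-Lipschitz by \eqref{2:prop1} and satisfies $(v^j)'\in Z$ a.e., so $Z(y)\cap[-C,C]\neq\emptyset$ for a.e.\ $y$ and hence, by continuity of $H^j$ and closedness of sublevel sets, for every $y$; combined with pointwise coercivity and convexity this also gives local uniform boundedness of $Z$. Continuity of $p_\pm$: in general $p_+$ is only upper semicontinuous and $p_-$ only lower semicontinuous (continuity can fail at points where $\min_p H^j(x,p)=0$), but these one-sided semicontinuities are exactly what your forward and backward difference-quotient limits require, so the argument goes through unchanged.
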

\begin{proof}
By  \eqref{2:prop1} all $v\in \cS$ are uniformly Lipschitz continuous. As $u(x)\in \R$, we thus have
$u(y)\in\R$ for any $y\in\G$. Moreover, $u$ is Lipschitz continuous on $\G$. Next observe that by Proposition \ref{2:prop3bis} $u$ is a supersolution
of \eqref{HJ}.\\	
 In order to prove that $u$ is also a subsolution we once more invoke (and only sketch) the argument used in Proposition \ref{2:prop2}.
Consider  $x=v_i$, $i\in I_T$,  $j,k\in Inc_i$, $j\neq k$, and
an upper $(j,k)$-test function $\phi$   of $u$ at $x$. Define the auxiliary function $\phi_\d(y):=\phi (y)+\d d(x,y)^2$ for $\d>0$.
Then $\phi_\d$ is an upper $(j,k)$-test function  of $u$ at $x$ and there exists $r>0$ such that $u-\phi_\d$ has a strict local maximum point in $\bar B_r(x)$ at $x$. Observe that $x$ is a strict maximum point
for $u-\phi_\d$ also in $\bar B:=\bar B_r(x)\cap (\bar e_j \cup\bar e_k)$. Let $u_n\in \cT$ be such that
\[
   u(x)-u_n(x)\ge -\frac{1}{n}
\]
and let $y_n$ be a maximum point for $u_n-\phi_\delta$ in $\bar B$. Up to  a subsequence, $y_n\to z\in \bar B$. Moreover,
\[u(x)-\phi_\d(x)\le u_n(x)-\phi_\d(x)\le u_n(y_n)-\phi_\d(y_n)\le u(y_n)-\phi_\d(y_n)+\frac{1}{n}.\]
Hence we obtain \eqref{2:prop22}.
Arguing as in Proposition \ref{2:prop2} we conclude
$H^j(x,\pd_j\phi(x))\le 0.$
\end{proof}

\section{A distance function for Hamilton-Jacobi equations}\label{sec3}
In this section we assume
\begin{equation}\label{H4}
    \cS^-(\G)\neq \emptyset,
\end{equation}
i.e. there exists a subsolution of \eqref{HJ} in $\G$.
We  introduce  a distance function related to the Hamiltonian $H$ on the network.
For $x,y\in\G$ define
\begin{equation}\label{2:dist_int}
   S(y,x)=\inf\left\{\int_0^t L (\g(s),\dot\g(s))ds:\,t>0\,\g\in B^t_{y,x}\right\},
\end{equation}
where $B^t_{y,x}$ as in \eqref{1:dist} and
\[L^j(x,q):=\sup_{p\in\R}\{p\, q- H^j(x,p)\}=\sup_{p\in\R}\{p\, q- H^j(\pi_j^{-1}(x),p)\}\]
for any $j\in J$, $x\in \bar e_j$.
Note that the  distance defined by \eqref{2:dist_int} coincides with  the distance defined by \eqref{1:dist}   for  $H(x,p)=|p|^2-1$.
The next proposition summarizes some properties of $S$.
\begin{proposition}\label{2:prop4}
$S$ is a  Lipschitz continuous distance on $\G\times \G$. Moreover, \begin{itemize}
\item[i)] for any $y\in \G$ we have $S(y,\cdot)\in \cS^-(\G)\cap \cS(\G\setminus\{y\})$,
\item[ii)]  for any $x,y\in \G$ we have
\begin{equation}\label{2:prop41}
S(y,x)=\max\{u(x)|\, u\in \cS^-(\G)\,\text{s.t. } u(y)=0\}.
\end{equation}
\end{itemize}
\end{proposition}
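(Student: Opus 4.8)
The plan is to deduce everything from a single \emph{domination principle}, after first recording the metric properties. From \eqref{H4} fix $w_0\in\cS^-(\G)$; by \eqref{2:prop1} it is locally Lipschitz, and since Lipschitz viscosity subsolutions satisfy the equation a.e. (see \cite{bcd}) we get $H^j(x,\pd_j w_0(x))\le0$ for a.e. $x\in e_j$, whence $\min_p H^j(x,p)\le0$ for every $x$ and $L^j(x,0)=-\min_p H^j(x,p)\in[0,\infty)$. Fenchel's inequality $pq\le H^j(x,p)+L^j(x,q)$ with $|\pd_j w_0|\le M$ gives $L^j(x,q)\ge pq\ge-M|q|$ for a $p$ with $H^j(x,p)\le0$, so the integrals in \eqref{2:dist_int} are bounded below and $S$ is finite. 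The triangle inequality follows by concatenating admissible paths; the upper Lipschitz bound $|S(y,x)-S(y,x')|\le C\,d(x,x')$ follows by joining $x$ to $x'$ by a short path traversed at a fixed moderate speed (on which $L$ is bounded by continuity); and $S(y,y)=0$, since the constant path on $[0,t]$ costs $tL^j(y,0)\to0$ as $t\to0^+$, with $S(y,y)\ge0$ coming from domination below.

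Next I would prove the domination principle: for every $w\in\cS^-(\G)$ and all $x,z\in\G$, $w(x)-w(z)\le S(z,x)$. Indeed $w$ is Lipschitz, so along any admissible $\g$ from $z$ to $x$ we have $\tfrac{d}{ds}w(\g(s))=\pd_j w(\g(s))\dot\g(s)$ on each portion lying in $e_j$, and Fenchel together with $H^j(\g,\pd_j w)\le0$ a.e. gives $\pd_j w(\g)\dot\g\le L^j(\g,\dot\g)$; integrating and taking the infimum over $\g$ yields the claim. Taking $z=y$ gives $S(y,x)\ge\sup\{w(x):w\in\cS^-(\G),\ w(y)=0\}$, which is the nontrivial inequality of \eqref{2:prop41}, and also supplies the lower Lipschitz bound and $S(y,y)\ge0$. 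To finish i)-sub and \eqref{2:prop41} I would show that any $w$ dominated by $S$ is a subsolution (so that $w=S(y,\cdot)$ qualifies by the triangle inequality, giving $S(y,\cdot)\in\cS^-(\G)$ and, with $S(y,y)=0$, the representation). For an upper test $\phi$ at $x_0\in e_j$ with $\xi_0=\pi_j^{-1}(x_0)$, the local-maximum inequality gives $\phi(x_0)-\phi(z)\le w(x_0)-w(z)\le S(z,x_0)$; taking $z=\pi_j(\xi_0+hq)$ with the straight return path (velocity $-q$, time $h$) and letting $h\to0$ gives $q'\pd_j\phi(x_0)\le L^j(x_0,q')$ for all $q'$, i.e. $H^j(x_0,\pd_j\phi(x_0))\le0$. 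At a transition vertex $x_0=v_i$ with an upper $(j,k)$-test $\phi$, probing along $e_j$ and along $e_k$ yields this inequality only for $q'$ in two opposite half-lines, and one closes the gap using $L^j(x_0,\cdot)=L^k(x_0,\cdot)$ from \eqref{1:H4}, the evenness of $L^j(x_0,\cdot)$ from \eqref{1:H5}, and $\pd_k\phi=-a_{ij}a_{ik}\pd_j\phi$ from \eqref{1:2}.

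It remains to prove that $u:=S(y,\cdot)$ is a supersolution at every $x_0\neq y$, completing i). Here I would use (near-)minimizing paths: by the direct method, using \eqref{1:H2}, \eqref{1:H3}, the lower bound on $L$ and finiteness of $S$, there is a path $\g^*:[0,t^*]\to\G$ realizing $S(y,x_0)$, \emph{nontrivial} because $x_0\neq y$; let $e_{k_0}$ carry its final portion (chosen by the finite pigeonhole as in Proposition \ref{2:prop2} if one argues with near-minimizers) and let $v^*$ be its terminal velocity. For $z_h=\g^*(t^*-h)$, tail optimality gives $u(x_0)-u(z_h)=\int_{t^*-h}^{t^*}L(\g^*,\dot\g^*)\,ds$, while for a lower $(j,k_0)$-test $\phi$ the local-minimum inequality gives $u(x_0)-u(z_h)\le\phi(x_0)-\phi(z_h)=\int_{t^*-h}^{t^*}\pd_{k_0}\phi(\g^*)\dot\g^*\,ds$. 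Subtracting and letting $h\to0$ yields $L(x_0,v^*)\le\pd_{k_0}\phi(x_0)v^*$, hence $H^{k_0}(x_0,\pd_{k_0}\phi(x_0))\ge\pd_{k_0}\phi(x_0)v^*-L(x_0,v^*)\ge0$ (if $v^*=0$ this forces $\min_p H^{k_0}(x_0,p)=0$, so the bound still holds). On an edge this is exactly the supersolution condition; at $x_0=v_i$ the edge $k_0$ is the required $i$-feasible edge, and \eqref{1:r21} of Remark \ref{1:r2} transfers the inequality to $H^j(x_0,\pd_j\phi(x_0))\ge0$ for the given $j$.

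I expect this last step to be the main obstacle: establishing existence/compactness of (near-)minimizing paths for the free-time functional, reading off a well-defined terminal edge $k_0$ and velocity $v^*$, and carrying out the orientation bookkeeping that identifies $k_0$ as $i$-feasible is precisely where the built-in asymmetry of Definition \ref{1:def3} (see Remark \ref{1:r2}(iii), ``there is always a shortest path from a transition vertex'') enters. The gluing of the two one-sided estimates in the vertex subsolution argument, via \eqref{1:H4}, \eqref{1:H5} and \eqref{1:2}, is the other point demanding care.
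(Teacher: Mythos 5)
The first three-quarters of your proposal --- finiteness and the metric/Lipschitz properties of $S$, the domination principle $w(x)-w(z)\le S(z,x)$ for every $w\in\cS^-(\G)$, the subsolution property of $S(y,\cdot)$ obtained by probing with short paths and Fenchel's inequality (with the vertex case closed via \eqref{1:H4}, \eqref{1:H5} and \eqref{1:2}), and the deduction of \eqref{2:prop41} --- is in substance identical to the paper's proof. The divergence is in the supersolution property on $\G\setminus\{y\}$, and there your argument has a genuine gap, located exactly at the point you yourself flag as the main obstacle.

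You invoke the direct method to produce a path $\g^*$ \emph{realizing} $S(y,x_0)$, with a well-defined terminal edge $k_0$ and terminal velocity $v^*$. Under the hypotheses in force in Section \ref{sec3} (only \eqref{1:H1}--\eqref{1:H5} and \eqref{H4}, no strict subsolution) such minimizers need not exist. Take $H^j(x,p)=p^2$: constants are subsolutions, so \eqref{H4} holds, $L^j(x,q)=q^2/4$, and for $x_0\neq y$ any admissible path of length $\ell\ge d(y,x_0)>0$ run in time $t$ costs at least $\ell^2/(4t)$ by Cauchy--Schwarz; hence $S(y,x_0)=0$ while every individual path has strictly positive cost, i.e.\ the infimum in \eqref{2:dist_int} is attained only ``as $t\to\infty$''. (The same happens, with a nontrivial supersolution condition at stake, for $H^j=p^2-\alpha$ when $\alpha\ge 0$ vanishes on an entire edge lying between $y$ and $x_0$.) The near-minimizer variant you sketch does not repair this as written: the identity $\phi(x_0)-\phi(z_h)=\int\pd_{k_0}\phi(\g^*)\dot\g^*\,ds$ is not even meaningful, because test functions in Definition \ref{1:def3} are only required to be differentiable \emph{at} the point, so only the first-order expansion at $x_0$ is available; moreover for near-minimizers the times may blow up and the terminal velocities need not converge, so the object $v^*$ on which your case distinction ($v^*\neq 0$ versus $v^*=0$) rests does not exist. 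Patching this requires Jensen's inequality on tails, superlinearity of $L$, a pigeonhole for the terminal edge, and uniform controls that the standing assumptions do not give. The paper proves the supersolution property by a completely different, curve-free argument: assuming it fails at some $x\neq y$, it takes the offending lower test functions $\phi_k$ with $H^j(x,\pd_j\phi_k(x))<0$, lifts them by a constant $\xi>0$, takes maxima edge by edge, and glues the resulting function with $u=S(y,\cdot)$ (via Proposition \ref{1:prop1}) to produce a subsolution on all of $\G$ vanishing at $y$ but strictly exceeding $S(y,x)$ at $x$, contradicting the maximality \eqref{2:prop41} already established. This Ishii-type bump construction is precisely what allows the proposition to hold under the weak hypothesis \eqref{H4}; your dynamic-programming route would need strictly stronger assumptions ensuring compactness of near-optimal curves.
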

\begin{proof}
By \eqref{2:prop1}  any subsolution $u$ of \eqref{HJ} in $\G$ is Lipschitz continuous. Integrating along a path joining $x$ and $y$ we get
\begin{equation}\label{4:lemma31}
   u(x)-u(y)\le S(y,x)\quad\text{for any $x,y\in\G$}.
\end{equation}
Thus by \eqref{H4} $S(y,x)$ is finite for any $x,y\in\bar\G$.

By the coercitivity  of $H$ assumed in \eqref{1:H3} there exists   constant $R>0$, $M$ such that
$L(x,q)\le M$  (i.e $L^j(\pi_j^{-1}(x),q)\le M$) for any $x\in\G$, $q\in B(0,R)$,  and therefore  $S(y,x)\le C_R\, d(y,x)$
(see  \cite[Prop.5.1]{i} for details). Moreover, given $x,y,z\in\G$,
the juxtaposition of two curves in $B_{y,z}$ and $B_{z,x}$ gives a curve in  $B_{y,x}$, whence
\[S(y,x)\le S(y,z)+S(z,x).\]
\indent\emph{$S(y,\cdot)$ is a subsolution: }
In order to prove that $S(y,\cdot)$ is a subsolution at $x_0\in\G$ we distinguish two cases:\\

\emph{Case 1:} $x_0\not \in \{v_i,i\in  I_T\}$. Assume $x_0\in e_j$ for some $j\in J$ and let $\psi$ be
an upper test function of $S(y,\cdot)$
at $x_0$. It follows $S(y,x_0)-\psi_j(\pi_j^{-1}(x_0))\ge S(y,x )-\psi_j(\pi_j^{-1}(x ))$ for $x\in B_r(x_0)\cap e_j$. Set
$t_0:=\pi_j^{-1}(x_0)$, fix $q\in \R$, and choose
  $h$ sufficiently small in such a way that $t_0-hq\in (0,l_j)$. Define
the curve $\g_h:[0,h]\to \G$ by $\g_h(s):=\pi_j(\frac{s}{h}t_0+(1-\frac{s}{h})t_h)$, where   $t_h:= t_0-hq $ and set $x_{hq}:=\pi_j(t_0-hq)$. Hence
\begin{align*}
& \pd \psi(x_0)\,q=   \pd_{j}\psi_j(t_0)\,q=\lim_{h\to 0^+}\frac{\psi_j(t_0)-\psi_j(t_h)}{h}\le \lim_{h\to 0^+}\frac{S(y,x_0)-S(y,x_{hq})}{h}\\
& \le \lim_{h\to 0^+}\frac{S(x_{hq},x_0)}{h}\le \lim_{h\to 0^+}\frac{1}{h}\int_0^h L(\g_h(s),\dot\g_h(s))ds=\\
&  \lim_{h\to 0^+}\frac{1}{h}\int_0^h L^j(\frac{s}{h}t_0+(1-\frac{s}{h})t_h,q)ds
=L^j(t_0,q)=L(x_0,q).
\end{align*}
Hence $H(x,\pd \psi(x_0))=\sup_{q}\{\pd \psi(x_0)\,q-L(x_0,q)\}\le 0$.\\

\emph{Case 2:} $x_0 \in \{v_i,i\in  I_T\}$. Assume $x_0=v_i$  and let $\psi$ be an upper $(j,k)$-test function of $S(y,\cdot)$ at $x_0$.
Set $t_m:=\pi_m^{-1}(x_0)$, $m=j,k$, and observe that for any $q\in\R$ and for $h$ sufficiently small we have $t_m-h(-a_{im}q)\in (0,l_m)$ (or equivalently
$\pi_m(t_m-h(-a_{im}q))\in e_m$) for $m=j,k$. Arguing as in case 1, we get
\begin{equation}\label{4:lemma10}
\begin{split}
    \pd_j\psi_j(\pi_j^{-1}(x_0))(-a_{ij}q)\le L^j(\pi_j^{-1}(x_0), -a_{ij}q),\\
     \pd_k\psi_k(\pi_k^{-1}(x_0))(-a_{ik}q)\le L^k(\pi_k^{-1}(x_0), -a_{ik}q).
    \end{split}
\end{equation}
Moreover, since $a_{ij}\pd_j\psi(\pi_j^{-1}(x_0))+a_{ik}\pd_k\psi_k(\pi_k^{-1}(x_0))=0$ and
$L^j(\pi_j^{-1}(x_0),q)=L^k(\pi_k^{-1}(x_0),q)$, we get
\begin{equation}\label{4:lemma10B}
  \begin{split}
     \pd_j\psi_j(\pi_j^{-1}(x_0))(a_{ij}q)\le L^j(\pi_j^{-1}(x_0), a_{ij}q),\\
     \pd_k\psi_k(\pi_k^{-1}(x_0))(a_{ik}q)\le L^k(\pi_k^{-1}(x_0), a_{ik}q).
    \end{split}
\end{equation}
By \eqref{4:lemma10} and \eqref{4:lemma10B} it follows $\pd_m\psi_m(\pi_m^{-1}(x_0))q\le L^m((\pi_m^{-1}(x_0),q)$ for $m=j,k$, whence
$H(x,\pd \psi(x_0))=\sup_{q}\{\pd \psi(x_0)q-L(x_0,q)\}\le 0$.
By \eqref{4:lemma31} and  since $S(y,\cdot )$ is a subsolution with $S(y,y)=0$, we finally obtain \eqref{2:prop41}.\vskip 4pt
\indent\emph{$S(y,\cdot)$ is a supersolution in $\G\setminus\{y\}$:}
In order to prove that $S(y, \cdot)$   is a supersolution at $x\neq y$ we only consider the case $x=v_i$, $i \in I_T$, being the other case standard.
Assume that $u(\cdot)=S(y, \cdot)$ is not a supersolution at $x$.
By definition  there exists an index $j\in Inc_i$ for which there does not exist any $i$-feasible index $k\in Inc_i$, $k\neq j$.
Hence  for any $k\in K=Inc_i\setminus\{j\}$ there exists a lower $(j,k)$-test function $\phi_k$ of $u$ at $x$ with
 \begin{equation}\label{4:lemma12}
   H^j(x,\partial_j\phi_k(x))<0.
 \end{equation}
By Remark \ref{1:r2}(i) we have
 \begin{equation}\label{4:lemma12b}
 H^j(x,\partial_j\phi_k(x))= H^k(x,\partial_k\phi_k(x))<0.
 \end{equation}
It is not restrictive to assume that $u(x)=\phi_k(x)$ for any $k\in K$.
By adding a term of the form $-\alpha d(z,x)^2$ we may assume
that $u-\phi_k$ attains a strict minimum point at $x$. Hence by \eqref{1:H1} and \eqref{4:lemma12b} there exists
$r>0$ such that for all $k\in K$ and $z\in B_r(x)\setminus \{x\}$
\begin{equation}\label{4:lemma13}
\begin{split}
&u(z)-\phi_k(z)>0\\
& H^j(z,\partial_j\phi_k(z))<0,\quad H^k(z,\partial_k\phi_k(z))<0.
\end{split}
\end{equation}
Let $\xi>0$ be such that
\begin{equation}\label{4:lemma14}
    u(z)-\phi_k(z)>\xi \qquad \text{for all $k\in K$ and $z\in \partial B_r(x)$}.
\end{equation}
Define $\tilde \phi_k(z):=\phi_k(z)+\xi$ and $\tilde v: \{x\}\cup\bigcup_{k\in Inc_i}\bar e_k\to \R$ by
\[\tilde v(z):=\left\{
                  \begin{array}{ll}
                    \max_{k\in K}\tilde \phi_k(z), & \hbox{if $z\in\bar e_j$,} \\
                    \tilde \phi_k(z), & \hbox{if $z\in\bar e_k$, $k\in K$.}
                  \end{array}
                \right.
\]
We claim that $\tilde v$ is a  subsolution of \eqref{HJ} in $B_r(x)$.\\

\emph{Case 1:} Consider  $z\in B_r(x)\cap e_l$. If $l\in K$, then $\tilde v(z)=\tilde \phi_l(z)$ and the claim follows by
\eqref{4:lemma13}. If $l=j$, by \eqref{4:lemma13} we have
\[H^k(z,\partial_k\tilde\phi_k(z))<0\quad\forall k\in K\]
and the subsolution condition follows by Proposition \ref{1:prop1}.\\

\emph{Case 2:} Consider $z=x$. First assume $l,m\in K$, $l\neq m$, and let $\psi$ be an upper $(l,m)$-test function of $\tilde v$ at $x$.
Set
\begin{equation*}
    d_l:= a_{il}\partial_l \psi(x),\,d_m:= a_{im}\partial_m \psi(x), \,\eta_l:= a_{il}\partial_l \tilde\phi_l(x),\,\eta_m:= a_{im}\partial_m \tilde\phi_m(x).
\end{equation*}
As $\psi$ is $(l,m)$-differentiable at $x$, we have $d_l+d_m=0$. If $d_l\le 0$, we have $d_l\ge \eta_l$ by the definition of $\tilde v$ and by the fact that $\tilde v- \psi$ attains a local maximum at $z=x$. Hence $|d_l|\le |\eta_l|$. Similarly, if $d_m\le 0$, we have $d_m\ge \eta_m$, implying
$|d_m|\le |\eta_m|$. We therefore conclude that
\[|\pd_l\psi(x)|=|\pd_m\psi(x)|\le \max\{|\pd_l\tilde\phi_l(x)|,\,|\pd_m\tilde\phi_m(x)|\}.\]
By the assumptions on $H$ (see \eqref{1:H1}-\eqref{1:H5}) the function $h:\R\to\R$, $p\mapsto H^s(x,p)$ for $s\in Inc_i$ is independent of $s$, symmetric
at $p=0$, and strictly increasing in $|p|$. Hence by \eqref{4:lemma12b}
\begin{align*}
H^l(x,\partial_l \psi (x))=h(\partial_l \psi (x))\le \max\{h(\pd_l\tilde\phi_l(x)),\,h(\pd_m\tilde\phi_m(x))\}=\\
\max\{H^l(x, \pd_l\tilde\phi_l(x)),\,H^m(x,\pd_m\tilde\phi_m(x))\}<0.
\end{align*}
Assume now that $\psi$ is an upper $(j,l)$-test function of $\tilde v$ at $x$ and set
\begin{equation*}
    d_j:= a_{ij}\partial_l \psi(x),\,d_l:= a_{il}\partial_l \psi(x), \,e_j:=\max_{k\in K} a_{ij}\partial_j \tilde\phi_k(x),\,\eta_l:= a_{il}\partial_l \tilde\phi_l(x).
\end{equation*}
As above, we have $|d_j|\le |e_j|$ if $d_j\le 0$ and
$|d_m|\le |\eta_m|$ if $d_m\le 0$. Hence we get
\[|\pd_j\psi(x)|=|\pd_l\psi(x)|\le \max\{\max_{k\in K}|\pd_k\tilde\phi_k(x)|,\,|\pd_l\tilde\phi_l(x)|\},\]
and therefore by \eqref{4:lemma12b}
\begin{align*}
H^j(x,\partial_j \psi (x))=h(\partial_j \psi (x))\le \max\{\max_{k\in K}h(\pd_j\tilde\phi_k(x)),\,h(\pd_l\tilde\phi_l(x))\}=\\
\max\{\max_{k\in K}H^k(x, \pd_j\tilde\phi_l(x)),\,H^l(x,\pd_l\tilde\phi_l(x))\}<0.
\end{align*}
Hence   $\tilde v$ is a viscosity subsolution in $B_r(y)$. Define the function $v:\G\to \R$ by
\[v(z):=\left\{
          \begin{array}{ll}
            \max\{\tilde v(z),u(z)\}, & \hbox{if $z\in B_t(x)$,} \\
            u(z), & \hbox{if $z\in\G\setminus B_t(x)$.}
          \end{array}
        \right.
\]
By \eqref{4:lemma14}, $v$ is continuous, $v=u$ outside $B_r(x)$, and $v$ is a subsolution of \eqref{HJ} in $\G$.
Since $v(x)=\tilde v(x)>u(x)$, we get a contradiction to \eqref{2:prop41}.
\end{proof}

\section{A comparison Theorem}\label{sec4}
This section is devoted to the proof of a comparison theorem for problem \eqref{HJ}.
\begin{theorem}\label{3:cor1}
Assume that there exists a closed subset $K\subset \G$ and a function $f\in C(\G)$ with $f(x)<0$ for all $x\in\G\setminus K$. Moreover, let $u$ be a subsolution of
\begin{equation}\label{HJsbis}
    H(x,D u)=f(x),\qquad x\in\G\setminus K,
\end{equation}
and let $v$ be a supersolution of \eqref{HJ} in $\G\setminus K$. If $u\le v$ on $\partial\G\cup K$, then $u\le v$ in $\bar\G$.
\end{theorem}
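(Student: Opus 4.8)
The goal is a comparison result: a subsolution $u$ of the perturbed equation $H=f<0$ (off $K$) lies below a supersolution $v$ of $H=0$, given the inequality on $\partial\G\cup K$. The natural approach is Ishii's doubling-of-variables argument, suitably adapted to the network. I would argue by contradiction, assuming $\max_{\bar\G}(u-v)=:\sigma>0$. Since $u\le v$ on $\partial\G\cup K$, any maximizer $\bar x$ of $u-v$ lies in $\G\setminus K$, where $f(\bar x)<0$. The key feature that makes the argument work is precisely the strictness supplied by $f$: because $u$ is a subsolution of $H=f$ with $f<0$, it is in effect a \emph{strict} subsolution of $H=0$ off $K$, and (as the introduction stresses) the existence of a strict subsolution is what drives uniqueness.

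\smallskip
\textbf{Main steps.} First I would double the variables: for $\e>0$ consider
\[
\Phi_\e(x,y):=u(x)-v(y)-\frac{1}{2\e}\,d(x,y)^2,
\]
and let $(x_\e,y_\e)$ maximize $\Phi_\e$ over $\bar\G\times\bar\G$. By the standard compactness/penalization estimates one has $d(x_\e,y_\e)^2/\e\to0$, $d(x_\e,y_\e)\to0$, and $\Phi_\e(x_\e,y_\e)\to\sigma>0$; hence for small $\e$ both $x_\e,y_\e$ cluster at a point of $\G\setminus K$ and stay away from $\partial\G\cup K$. Next I would exploit the subsolution property at $x_\e$ and the supersolution property at $y_\e$ with the test functions obtained by freezing one variable in the penalization term $\tfrac{1}{2\e}d(\cdot,\cdot)^2$. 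On a fixed edge this is routine; the delicate point is handling the case where $x_\e$ or $y_\e$ is a transition vertex, where the notion of $(j,k)$-differentiability and the asymmetric super/subsolution conditions of Definition \ref{1:def3} come into play. Here the Lipschitz regularity of subsolutions from \eqref{2:prop1}, the coercivity \eqref{1:H3}, and the compatibility conditions \eqref{1:H4}--\eqref{1:H5} (so that the relevant one-variable Hamiltonian $h(p):=H^s(x,p)$ is edge-independent and even) let me match the slopes of the test functions across incident edges and compare the Hamiltonian values.

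\smallskip
The heart of the contradiction is the strictness gap. At the paired points the subsolution inequality gives $H^j(x_\e,p_\e)\le f(x_\e)$ while the supersolution inequality gives $H^j(y_\e,q_\e)\ge 0$ for a feasible index; the penalization ties the ``momenta'' $p_\e$ and $q_\e$ together through the common gradient $\tfrac{1}{\e}d(x_\e,y_\e)\cdot(\text{direction})$ of the coupling term, so that $|p_\e-q_\e|\to0$. Passing to the limit and using the continuity of $H$ from \eqref{1:H1}, the two inequalities collapse to $0\le f(\bar x)<0$, a contradiction. The main obstacle I anticipate is the vertex analysis: one must select, among the edges incident to a transition vertex $y_\e$, the $i$-feasible index guaranteed by the supersolution definition, and verify that the doubled test function is $(j,k)$-differentiable there so that the supersolution condition actually applies; reconciling this with the subsolution side (which, by Remark \ref{1:r2}(i), may be tested on \emph{any} incident edge) is where the asymmetry of Definition \ref{1:def3} must be used carefully. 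A secondary technical point is ensuring the penalized maxima do not escape to $\partial\G\cup K$, which is where the boundary hypothesis $u\le v$ and the strict sign of $f$ must be invoked jointly.
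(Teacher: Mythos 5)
Your proposal takes essentially the same route as the paper's proof: Ishii's doubling of variables with the penalization $\e^{-1}d(x,y)^2$, localization of the penalized maximizers in $\G\setminus K$ away from $\partial\G\cup K$, a case analysis distinguishing edge points from transition vertices (using the $i$-feasible index, $(j,k)$-differentiability of the frozen penalization term, and the compatibility conditions \eqref{1:H4}--\eqref{1:H5}), and a contradiction driven by the strict negativity of $f$ combined with the momentum estimate coming from the Lipschitz bound \eqref{2:prop1}. The only cosmetic difference is that the paper derives the final contradiction by bounding the gap $\eta\le -f(p_\e)$ with moduli of continuity of $H$ evaluated at momenta of size $\e^{-1}d(p_\e,q_\e)\to 0$, rather than by passing to the limit in the two inequalities directly.
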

\begin{proof}
Assume by contradiction that there exists $z\in\G\setminus K$ such that
\begin{equation}\label{5:1}
    u(z)-v(z)=\max_{\bar\G}\{u-v\}=\delta>0.
\end{equation}
For $\e>0$ define $\Phi_\e:\bar\G\times\bar\G\to \R$ by
\[\Phi_\e(x,y):=u(x)-v(y)- \e^{-1}d(x,y)^2.\]
As $\Phi_\e$ is upper semicontinuous there exists a maximum point $(p_\e,q_\e)$ for $\Phi_\e$ in $\bar\G^2$. By
$\Phi_\e(z,z)\le \Phi_\e(p_\e,q_\e)$ we get
 \begin{equation}\label{5:2}
 \e^{-1}d(p_\e,q_\e)^2\le u(p_\e)-v(q_\e)-\delta,
 \end{equation}
whence
\begin{equation}\label{5:3}
    \lim_{\e\to 0} d(p_\e,q_\e)=0.
\end{equation}
By the compactness of $\bar \G$, there exists $\bar p\in\bar \G$ such that $p_\e,q_\e \to \bar p$. By \eqref{5:2} and the Lipschitz
continuity of $u$ (see \eqref{2:prop1}) we get
\begin{equation*}
 \e^{-1}d(p_\e,q_\e)^2\le u(p_\e)-u(q_\e)+u(q_\e)-v(q_\e)-\delta\le Ld(p_\e, q_\e)
 \end{equation*}
and  therefore
\begin{equation}\label{5:4}
\lim_{\e \to 0^+}\e^{-1}d(p_\e,q_\e)=0.
\end{equation}
Moreover, by \eqref{5:2}-\eqref{5:3} we have $\bar p\in\G\setminus K$
as well as $p_\e, q_\e\in\G\setminus K$ for a sufficiently small choice of $\e$. Next observe that it is possible to assume that there exists a unique  path
$\g=\g_\e$ of length $d(p_\e,q_\e)$ in $\G$ connecting $p_\e$ and $q_\e$ which runs through at most one vertex $v_i$, $i\in I$. We distinguish several cases
(for simplicity we set $p:=p_\e$, $q:=q_\e$).

\emph{Case 1:} There are indices $i\in I$ and $j,k\in Inc_i$ such that $p\in e_j$, $q \in e_k$ and such that $\g$ runs through $v_i$.
We observe that the functions $\phi_p(x):=\e^{-1}d(p,x)^2$ and $\phi_q(x):=\e^{-1}d(x,q)^2$ are differentiable at $q$ and $p$, respectively.
In fact, if
\begin{equation}\label{5:5}
\tilde p=\pi_j^{-1}(p),\quad\tilde q=\pi_k ^{-1}(q),
\end{equation}
we have
\begin{align*}
    \pd_j\phi^j_q(\tilde p)=\e^{-1}d(p,q)a_{ij},\quad  \pd_k\phi^k_p(\tilde q)=\e^{-1}d(p,q)a_{ik}.
\end{align*}
Observe that $u-\phi_q$ has a maximum point at $p$ and $v+\phi_p$ has a minimum point at $q$, whence
\begin{align*}
    &H^j( p,\pd_j\phi_q( p))=H^j(\tilde p,\e^{-1}d(p,q)a_{ij})\le f(p) \\
    &H^k(q,\pd_k\phi_p(q))=H^k(\tilde q,-\e^{-1}d(p,q)a_{ik})\ge 0.
\end{align*}
We denote by $\omega_m$, $m=j,k$, the modulus of continuity of $H^m$ with respect to $(x,p)\in\bar e_m\times \R$.
By \eqref{1:H4} and \eqref{1:H5}  there is some $\eta>0$ such that for sufficiently small $\e>0$ we have
\begin{align*}
  \eta\le -f(p)\le  H^k(\tilde q,-\e^{-1}d(p,q)a_{ik})-H^j(\tilde p,\e^{-1}d(p,q)a_{ij})\le\\
H^k(v_i,-\e^{-1}d(p,q)a_{ik})-H^j(v_i,\e^{-1}d(p,q)a_{ij})+\omega_k(d(v_i,q))+\omega_j(d(v_i,p))=\\
H^j(v_i,\e^{-1}d(p,q)a_{ik})-H^j(v_i,\e^{-1}d(p,q)a_{ij})+\omega_k(d(v_i,q))+\omega_j(d(v_i,p))\\
\le \omega_j(\e^{-1}d(p,q)(|a_{ij}|+|a_{ik}|))+\omega_k(d(v_i,q))+\omega_j(d(v_i,p)).
\end{align*}
By \eqref{5:4} we get a contradiction for $\e\to 0$.

\emph{Case 2:} There are indices $i\in I$ and $j\in Inc_i$ such that $p\in e_j$ and $q=v_i$.
As $q\in\G$, we have  $i\in I_T$.
Setting $\phi_p$ and $\phi_q$ as above and using a notation similar to \eqref{5:5} we have
\begin{equation}\label{5:6}
    \pd_j\phi^j_p(\tilde q)=-\e^{-1}d(p,q)a_{ij},\quad  \pd_k\phi^k_p(\tilde q)=\e^{-1}d(p,q)a_{ik}\quad\text{for all $k\in Inc_i$, $k\neq j$.}
\end{equation}
Hence
\begin{equation}\label{5:7}
    a_{ij}\pd_j\phi^j_p(\tilde q)+a_{ik}\pd_k\phi^k_p(\tilde q)=(-a_{ij}^2+a_{ik}^2)\e^{-1}d(p,q)=0\quad\text{for all $k\in Inc_i$, $k\neq j$.}
\end{equation}
Thus $\phi_p$ is $(j,k)$-differentiable at $q$ for all $k\in Inc_i$, $k\neq j$. Moreover,
\[\pd_j\phi^j_q(\tilde p)=\e^{-1}d(p,q)a_{ij}. \]
Since $u-\phi_q$ has a maximum point at $p$, if follows
\begin{equation}\label{5:8}
    H^j( p,\pd_j\phi_q( p))=H^j(\tilde p,\e^{-1}d(p,q)a_{ij})\le f(p).
\end{equation}
Moreover, since $v+\phi_p$ has a minimum point at $q=v_i$, there is an $i$-feasible index $k_0\in Inc_i$, $k_0\neq j$, for $j$. By \eqref{5:7},
$\phi_p$ is $(j,k_0)$-differentiable, whence we obtain
\begin{equation}\label{5:9}
  H^j(q,\pd_j\phi_p(q))=H^j(\tilde q,-\e^{-1}d(p,q)a_{ik})\ge 0.
\end{equation}
Subtracting \eqref{5:8} from \eqref{5:9} we derive a contradiction as in case 1.

\emph{Case 3:} There are indices $i\in I$ and $j\in Inc_i$ such that $p=v_i$ and $q\in e_j$. We proceed
 as in case 2, observing that the definition of subsolutions is less restrictive than the definition of supersolutions
and therefore no extra argument is required.

\emph{Case 4:} There are indices $j\in J$  such that $p, q\in e_j$ and $p\neq q$. Setting $\phi_p$ and $\phi_q$ as above and using a notation similar to \eqref{5:5} we have
\begin{equation}
    \pd_j\phi^j_q(  p)=-  \pd_j\phi^j_p(q).
\end{equation}
Hence we have
\begin{align*}
    &H^j( p,\pd_j\phi_q( p))\le f(p), \\
    &H^j(q,-\pd_j\phi_p(q))=H^j(q,\pd_j\phi_q( p))\ge 0
\end{align*}
and we conclude as in the previous cases.

\emph{Case 5:} We finally assume that $p=q$. Assume $p=q=v_i$ for $i\in I_T$ (the case $p,q\in e_j$ for $j\in J$ is similar). Then
 \[\pd_j\phi^j_q( \pi_j^{-1}(v_i))=  \pd_j\phi^j_p( \pi_j^{-1}(v_i))=0\]
for all $j\in Inc_i$. In particular for each choice of $j,k\in Inc_i$, both $\phi_q$ and  $-\phi_p$ are $(j,k)$-differentiable and
we get a contradiction as in the previous cases.
\end{proof}


\section{Representation formula for viscosity solutions}\label{sec5}
In this section we give  a representation formula for the solution of the Dirichlet problem
\begin{align}
    H(x,Du)=0,\qquad &x\in \G\label{4:dir1},\\
    u=g,\qquad &x\in \partial\G\label{4:dir2}.
\end{align}
In addition to \eqref{1:H1}-\eqref{1:H5} and \eqref{H4}, in this section we assume  that
\begin{equation}\label{4:1}
\begin{split}
&\text{there exist a closed (possibly empty) subset $K\subset \G$,
a differentiable} \\
&\text {function $\psi$, and   $h\in C(\G)$ with $h<0$ in $\Gamma\setminus K$ such that}\\
   &\qquad\qquad H(x,D \psi)\le h(x),\qquad x\in\G\setminus K,
    \end{split}
\end{equation}
i.e. $\psi$  is differentiable the sense of Definition \ref{1:def2}
and a strict subsolution  in  $\Gamma\setminus K$.

\begin{proposition}\label{6:prop1}
Let $g:\bar\G\to \R$ be a continuous function satisfying
\begin{equation}\label{4:comp}
    g(x)-g(y)\le S(y,x)\qquad\text{for any $x$, $y\in K\cup \partial \G$},
\end{equation}
where $S$ is the distance defined in \eqref{2:dist_int}.
Then the unique viscosity solution of \eqref{4:dir1}--\eqref{4:dir2} is given by
\[u(x):=\min\{g(y)+S(y,x):\, y\in K\cup \partial \G\}.\]
\end{proposition}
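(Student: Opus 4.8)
The plan is to verify that
\[
u(x):=\min\{g(y)+S(y,x):\,y\in K\cup\pd\G\}
\]
is well defined, continuous, attains the prescribed values on $K\cup\pd\G$, is a subsolution in $\G$ and a supersolution in $\G\setminus K$, and is the unique such function by the comparison Theorem \ref{3:cor1}. For well-definedness, note that $K\cup\pd\G$ is closed, hence compact, in $\bar\G$, and by Proposition \ref{2:prop4} $S$ is a finite Lipschitz distance; thus $y\mapsto g(y)+S(y,x)$ is continuous, the minimum is attained and finite, and $u$ is Lipschitz, being a minimum of the uniformly Lipschitz functions $S(y,\cdot)$ shifted by constants. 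The values on $K\cup\pd\G$ follow from \eqref{4:comp}: taking $y=x$ gives $u(x)\le g(x)$, while \eqref{4:comp} read as $g(y)+S(y,x)\ge g(x)$ gives $u(x)\ge g(x)$; hence $u=g$ on $K\cup\pd\G$, in particular $u=g$ on $\pd\G$.

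To prove that $u$ is a subsolution in $\G$, I would first establish the domination inequality $u(x)-u(z)\le S(z,x)$ for all $x,z\in\G$, which is immediate from the defining minimum together with the triangle inequality $S(y,x)\le S(y,z)+S(z,x)$ of Proposition \ref{2:prop4}. Since $u$ then satisfies exactly the inequality that $S(y,\cdot)$ satisfies in the subsolution part of the proof of Proposition \ref{2:prop4} (there obtained from the triangle inequality, here from domination), the very same computation applies verbatim: for an upper test function $\phi$ at $x_0$ one has $\phi(x_0)-\phi(x_{hq})\le u(x_0)-u(x_{hq})\le S(x_{hq},x_0)$, and dividing by $h$ and letting $h\to0^+$ yields $\pd_j\phi(x_0)\,q\le L^j(x_0,q)$ for all $q$, whence $H^j(x_0,\pd_j\phi(x_0))\le 0$ after taking the Legendre transform. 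Interior points and transition vertices are handled exactly as in Cases 1 and 2 there.

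The supersolution step in $\G\setminus K$ is the delicate point, and the place where the minimum structure is essential. Fix $x\in\G\setminus K$ and let $y^*\in K\cup\pd\G$ realize the minimum, so that the function $u_{y^*}:=g(y^*)+S(y^*,\cdot)$ satisfies $u\le u_{y^*}$ with $u(x)=u_{y^*}(x)$, i.e. it touches $u$ from above at $x$. Since $x\notin\pd\G$ and $x\notin K$, we have $x\ne y^*$, so by Proposition \ref{2:prop4}(i) $u_{y^*}\in\cS(\G\setminus\{y^*\})$ is in particular a supersolution at $x$ (adding the constant $g(y^*)$ is harmless, as $H$ is independent of $u$). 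If $\phi$ is a lower test function (a lower $(j,k)$-test function at a vertex) of $u$ at $x$, then near $x$ we have $u_{y^*}-\phi\ge u-\phi\ge (u-\phi)(x)=(u_{y^*}-\phi)(x)$, so $\phi$ is also a lower test function of $u_{y^*}$ at $x$; the supersolution inequality for $u_{y^*}$ then gives the same inequality for $u$, and at a transition vertex the $i$-feasible index furnished by $u_{y^*}$ is $i$-feasible for $u$. This is precisely the asymmetric condition of Definition \ref{1:def3}, which the ``$\min$'' formula is designed to supply; the argument crucially uses that $S(y^*,\cdot)$ is a genuine \emph{solution} away from $y^*$, not merely a subsolution, which is why the minimizer must lie in $K\cup\pd\G$ and be distinct from $x$.

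For uniqueness, let $w$ be any solution in $\G\setminus K$ with $w=g$ on $\pd\G\cup K$ (the class addressed by Theorem \ref{3:cor1}). Using the differentiable strict subsolution $\psi$ from \eqref{4:1} and the convexity \eqref{1:H2} of $H$, the convex combination $(1-\mu)w+\mu\psi$ is, for $\mu\in(0,1)$, a strict subsolution of $H(x,Du)=\mu h(x)$ in $\G\setminus K$, with $\mu h<0$ there. Since $H$ depends only on $Du$, I may first replace $\psi$ by $\psi-C$ for a large constant $C$ so that $\psi-C\le g$ on the compact set $\pd\G\cup K$; this ensures $(1-\mu)w+\mu(\psi-C)\le g=u$ on $\pd\G\cup K$. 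Applying Theorem \ref{3:cor1} with $f=\mu h$ and letting $\mu\to0^+$ gives $w\le u$ on $\bar\G$, and exchanging the roles of $u$ and $w$ (using that $u$ is a subsolution in $\G$ and $w$ a supersolution in $\G\setminus K$) gives $u\le w$, whence $u=w$. The main obstacle is the supersolution step above; the remaining technical care lies only in the constant shift of $\psi$ needed to match boundary data in the comparison argument.
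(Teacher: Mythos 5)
Your proposal is correct, and its uniqueness half is essentially the paper's own argument: you form the convex combination of the candidate solution with the strict subsolution $\psi$ from \eqref{4:1}, shift $\psi$ by a constant so that the combination lies below the data, apply Theorem \ref{3:cor1} with $f=\mu h$, and let $\mu\to 0^+$; this is exactly the paper's proof with $\theta=1-\mu$. Where you genuinely differ is the existence half. The paper disposes of it in one line by citing Proposition \ref{2:prop3} (an infimum of a family of solutions is a solution, proved there by a stability-type argument with approximating maximum points), whereas you verify the two half-properties directly: the subsolution property follows from the domination inequality $u(x)-u(z)\le S(z,x)$ by re-running the Legendre-transform computation in the proof of Proposition \ref{2:prop4}, and the supersolution property on $\G\setminus K$ follows from the touching argument at an attained minimizer $y^*$, which is the mechanism used in Proposition \ref{1:prop1}. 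Your route is longer but sharper on one real point: Proposition \ref{2:prop3} formally requires a family $\cT\subset\cS(\G)$, while for $y\in K$ the function $g(y)+S(y,\cdot)$ is a solution only in $\G\setminus\{y\}$ (it may fail the supersolution condition at $y$ itself), so the paper's citation is slightly loose when $K\neq\emptyset$; your observation that the minimizer $y^*$ is necessarily distinct from $x$ whenever $x\in\G\setminus K$ supplies exactly the missing detail. It also makes explicit, as the paper does not, that the supersolution property is obtained (and, for Theorem \ref{3:cor1}, only needed) on $\G\setminus K$, and that uniqueness is proved within the class of functions solving the equation there and agreeing with $g$ on all of $K\cup\partial\G$, not merely on $\partial\G$ --- which is the reading under which both your proof and the paper's are complete.
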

\begin{proof}
By Proposition \ref{2:prop3} $u$ is a solution of \eqref{HJ}.
Observe that we have  $u(x)\neq g(x)$ for  $x\in K\cup \partial \G$ if and only if there is some $z\in   K\cup \partial \G$
such that $g(x)>S(z,x)+g(z)$. However, this is ruled out by assumption \eqref{4:comp}. Hence $u$ is a solution of \eqref{4:dir1}-\eqref{4:dir2}.
\\
Assume that there exists another solution $v$ of \eqref{4:dir1}-\eqref{4:dir2}. For $\theta\in (0,1)$ define
$u_\theta :=\theta u+(1-\theta)\psi$, where $\psi$ as in \eqref{4:1}. By adding a constant it is not restrictive to assume that $\psi$ is sufficiently small in such a way that
 \begin{equation}\label{4:2}
 u_\theta(x)\le u(x),\qquad x\in\bar\G.
 \end{equation}
 First, let $x\in e_j\cap (\Gamma\setminus K)$  for some $j\in J$ and let $\phi$ be  an upper test
function of $u$ at $x$. Setting $\phi_\theta:=\theta\phi +(1-\theta)\psi$ we obtain by means of convexity
 \begin{equation}\label{4:3}
    H^j(x, \pd_j \phi_\theta)\le \theta H^j(x,\pd_j\phi)+(1-\theta)H^j(x,\pd_j\psi)\le (1-\theta)h^j(x).
\end{equation}

  Secondly, assume that $x=v_i$ for some $i\in I_T$. Fix any two indices $j,k\in Inc_i$, $j\neq k$, and let
$\phi$ be an upper $(j,k)$-test function of $u$ at $x$. Setting $\phi_\theta:=\theta\phi +(1-\theta)\psi$ and observing that by definition \ref{1:def2}
$\phi_\theta$ is an upper $(j,k)$-test function of $u_\theta$ at $x$, we again obtain \eqref{4:3}. Hence $u_\theta$ is a viscosity subsolution
of
\[ H(x, \pd_j u)\le (1-\theta)h^j(x).\]
Applying theorem \ref{3:cor1} with $f=(1-\theta)h$ and \eqref{4:2}, it follows $u_\theta\le v$ for all $\theta\in (0,1)$. Letting $\theta$ tend to $1$ yields
$u\le v$. Exchanging the role of $u$ and $v$ we conclude that $u=v$ in $\bar\G$.
\end{proof}
\begin{remark}
For the problem $|D u|^2-\alpha(x)=0$ (see Remark \ref{1:rem1}) the existence of a strict subsolution follows
by setting $K:=\{x\in\G:\, \alpha(x)=0\}$, $\psi:=C$ for some suitable $C\in\R$, and $h(x):=-\alpha(x)$.
\end{remark}
If $g$ does not satisfy assumption \eqref{4:comp} we can still characterize $S$ as the maximal solution
of the problem.
\begin{proposition}
Let $g:\bar\G\to \R$ be a continuous function. Then
\[u(x):=\min\{g(y)+S(x,y):\, y\in K\cup \partial \G\}\]
is the maximal solution of \eqref{4:dir1}  among
the solutions $v$ of \eqref{4:dir1} which satisfy $v\le g$ on $ K\cup \partial \G$.
\end{proposition}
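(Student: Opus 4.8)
The plan is to check that $u$ belongs to the competing class --- that is, $u$ is a solution of \eqref{4:dir1} and $u\le g$ on $K\cup\partial\G$ --- and then to show that $u$ lies above every competitor. I first note that $K\cup\partial\G$ is compact and that, by Proposition \ref{2:prop4}, $S$ is a finite Lipschitz continuous distance on $\G\times\G$; hence the infimum defining $u$ is attained (it is genuinely a minimum) and $u$ is finite and Lipschitz on $\G$. This is the same setting as Proposition \ref{6:prop1}, the difference being that the compatibility assumption \eqref{4:comp} is dropped, so that one can no longer expect $u=g$ on $K\cup\partial\G$ but only a one-sided bound together with a maximality property.

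To see that $u$ is a solution I would repeat the argument of Proposition \ref{6:prop1}. For each fixed $y\in K\cup\partial\G$ the function $x\mapsto g(y)+S(y,x)$ belongs to $\cS^-(\G)\cap\cS(\G\setminus\{y\})$ by Proposition \ref{2:prop4}(i). At any point of $\G\setminus K$ every one of these functions is a genuine solution (its only excluded base point $y$ either lies in $K$ or is a boundary vertex, hence outside $\G$), so Proposition \ref{2:prop3} applies and shows that their pointwise minimum $u$ is a solution of \eqref{HJ} there. The inequality $u\le g$ on $K\cup\partial\G$ is immediate: for $x\in K\cup\partial\G$ the choice $y=x$ is admissible in the minimum and $S(x,x)=0$, whence $u(x)\le g(x)$.

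Maximality then follows at once from the fundamental subsolution inequality \eqref{4:lemma31}. Let $v$ be any solution of \eqref{4:dir1} with $v\le g$ on $K\cup\partial\G$. As a solution, $v$ is in particular a subsolution of \eqref{HJ}, so \eqref{4:lemma31} gives
\[
v(x)-v(y)\le S(y,x)\qquad\text{for all }x,y\in\G,
\]
and by continuity of $v$ and $S$ the same bound holds for every $y\in K\cup\partial\G$. Using $v(y)\le g(y)$ we obtain $v(x)\le g(y)+S(y,x)$ for each admissible $y$, and minimising over $y\in K\cup\partial\G$ yields $v(x)\le u(x)$ on $\G$. Thus $u$ is the largest solution dominated by $g$ on $K\cup\partial\G$.

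The only delicate point, and the reason the statement must be read with $K\cup\partial\G$ as a generalized boundary, is the source set $K$: the comparison functions $g(y)+S(y,\cdot)$ are solutions only away from their base point, so they lose the supersolution property precisely on $K$, which is why \eqref{HJ} is effectively imposed on $\G\setminus K$. One must also respect the orientation of the (non-symmetric) distance $S$ --- symmetry of the Lagrangian holds only at the vertices, through \eqref{1:H5} --- so that the direction $S(y,x)$ delivered by \eqref{4:lemma31} is exactly the one entering the representation formula, in agreement with Proposition \ref{6:prop1}. Granting these bookkeeping points, the maximality is the immediate consequence displayed above.
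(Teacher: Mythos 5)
Your proof is correct and is essentially the paper's own argument: $u$ is shown to be a solution via Propositions \ref{2:prop4}(i) and \ref{2:prop3}, and maximality follows from the subsolution estimate \eqref{4:lemma31} together with $v\le g$ on $K\cup\partial\G$, exactly as in the paper. The only differences are cosmetic and in your favour: you conclude by minimizing over $y$ directly (the paper instead cites Theorem \ref{3:cor1}, which is not actually needed once the displayed inequality is in hand), you note that the solution property of $u$ can only be asserted on $\G\setminus K$ (the paper glosses over the base points $y\in K$), and you correctly read the misprinted $S(x,y)$ in the statement as $S(y,x)$, consistent with the paper's proof and with Proposition \ref{6:prop1}.
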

\begin{proof}
By Proposition \ref{2:prop3}, $u$ is a solution of \eqref{HJ}. If $v$ is a solution of \eqref{4:dir1}, then by \eqref{4:lemma31}
\begin{align*}
    v(x)\le v(y)+S(y,x)\le g(y)+S(y,x)\quad\text{for any $y\in K\cup \partial \G$},
\end{align*}
and therefore the statement follows by Theorem \ref{3:cor1}.
\end{proof}
\begin{remark}
As explained in the introduction, a motivation of our work comes from the shortest path problem on a network.
The case of a weighted graph studied in graph theory   fits in our framework. In fact it is sufficient to choose the function $\a_j$
in Remark \ref{1:rem1} in such a way that  its integral   along the edge $e_j$ is equal to the given weight. We will study this problem in more details in a forthcoming paper.
\end{remark}


\begin{acknowledgement}
This work is based on earlier results contained in the Ph.D. thesis of the first author. He would like to express his gratitude to Prof. K. P. Hadeler  for the support
and the guidance during the completion of the thesis.
\end{acknowledgement}
\bibliographystyle{amsplain}

\end{document}